\newif\ifpreprint
  \newtheorem{theorem}{Theorem}
\newcommand\indicator{%
  \boldsymbol{\mathbb{1}}
}
\newtheorem{mydef}{Definition}
\theoremstyle{remark}
\newtheorem*{remark}{Remark}
  \title{Confidence Intervals for Stochastic Arithmetic}
  \author{Devan Sohier\thanks{devan.sohier@uvsq.fr} \hspace{1cm} Pablo de Oliveira Castro\thanks{pablo.oliveira@uvsq.fr} \\
    Université Paris-Saclay, UVSQ, LI-PaRAD
  \and
  François Févotte\thanks{francois.fevotte@triscale-innov.com}\\
  TriScale innov
  \and
  Bruno Lathuilière\thanks{bruno.lathuiliere@edf.fr}\\
  EDF R\&D -- PERICLES
  \and
  Eric Petit\thanks{eric.petit@intel.com}\\
  Intel Corp.
  \and
  Olivier Jamond\thanks{olivier.jamond@cea.fr}\\
  CEA
  }
  \title[Confidence Intervals for Stochastic Arithmetic]{Confidence Intervals for Stochastic Arithmetic}
  \author{Devan Sohier}
  \affiliation{\institution{Université Paris-Saclay, UVSQ, Li-PaRAD}
    \streetaddress{9 blvd. d'Alembert, bât. Rabelais}
    \city{Guyancourt}
    \country{France}
    \postcode{78280}
  }
  \email{devan.sohier@uvsq.fr}
  \author{Pablo de Oliveira Castro}
  \affiliation{\institution{Université Paris-Saclay, UVSQ, Li-PaRAD}
    \streetaddress{9 blvd. d'Alembert, bât. Rabelais}
    \city{Guyancourt}
    \country{France}
    \postcode{78280}
  }
  \email{pablo.oliveira@uvsq.fr}
  \author{François Févotte}
  \affiliation{\institution{TriScale innov}
    \streetaddress{Drahi - X Novation center}
    \city{Palaiseau}
    \country{France}
    \postcode{91128}
  }
  \email{francois.fevotte@triscale-innov.com}
  \author{Bruno Lathuilière}
  \affiliation{\institution{EDF R\&D -- PERICLES}
    \streetaddress{7 boulevard Gaspard Monge}
    \city{Palaiseau}
    \country{France}
    \postcode{91120}
  }
  \email{bruno.lathuiliere@edf.fr}
  \author{Eric Petit}
  \affiliation{\institution{Intel Corp.}
    \streetaddress{2 rue de Paris}
    \city{Meudon}
    \country{France}
    \postcode{92190}
  }
  \email{eric.petit@intel.com}
  \author{Olivier Jamond}
  \affiliation{\institution{CEA}
    \streetaddress{}
    \city{}
    \country{}
    \postcode{}
  }
  \email{olivier.jamond@cea.fr}
\begin{document}

\ifpreprint
\maketitle
\fi

\begin{abstract}
Quantifying errors and losses due to the use of Floating-Point (FP) calculations in industrial scientific computing codes is  an important part of the Verification, Validation and Uncertainty Quantification (VVUQ) process. Stochastic Arithmetic is one way to model and estimate FP losses of accuracy, which scales well to large, industrial codes. It exists in different flavors, such as CESTAC or MCA, implemented in various tools such as CADNA, Verificarlo or Verrou. These methodologies and tools are based on the idea that FP losses of accuracy can be modeled via randomness. Therefore, they share the same need to perform a statistical analysis of programs results in order to estimate the significance of the results.

In this paper, we propose a framework to perform a solid statistical analysis of Stochastic Arithmetic. This framework unifies all existing definitions of the number of significant digits (CESTAC and MCA), and also proposes a new quantity of interest: the number of digits contributing to the accuracy of the results. Sound confidence intervals are provided for all estimators, both in the case of normally distributed results, and in the general case. The use of this framework is demonstrated by two case studies of industrial codes: Europlexus and code\_aster.
\end{abstract}

\ifpreprint
\else
\maketitle
\fi

\section{Introduction}

Modern computers use the IEEE-754 standard for implementing floating point (FP)
operations. Each FP operand is represented with a limited precision.  Single
precision numbers have 23 bits in the fractional part and double precision numbers have
52 bits in the fractional part. This limited precision may cause numerical
errors~\cite{higham2002accuracy} such as absorption or catastrophic
cancellation which can result in loss of significant bits in the result.

Floating Point computations are used in many critical fields such as
structure, combustion, astrophysics or finance simulations. Determining
the precision of a result is an important problem. For well known algorithms, 
bounds of the numerical error can be derived mathematically for a given
dataset~\cite{higham2002accuracy}. 

The Stochastic Arithmetic field proposes automatic methods for estimating the
number of significant digits for complex programs. Two main methods have been proposed:
CESTAC~\cite{VIGNES2004} and Monte Carlo Arithmetic (MCA)~\cite{PARKER1997},
which differ in many subtle ways, but share the same general
principles. Numerical errors are modeled by introducing random perturbations at
each FP operation. This transforms the output of a given simulation code into realizations of a random variable. Performing a statistical analysis of a set of sampled outputs allows to stochastically approximate the impact of numerical errors on the code results.

In this paper, we propose a solid statistical analysis of Stochastic Arithmetic
that does not necessarily rely on the normality assumption and provides strong
confidence intervals. As stated in \cite[p.45]{PARKER1997}, \emph{``MCA is not committed to any assumption of normality. Sampling is an art and the right approach to sampling can settle very tricky problems.''} and, a little further: \emph{``MCA is not committed to a statistical inference model''}. Unlike traditional analysis which only considers the
number of reliable significant digits, this paper introduces a new quantity of
interest: the number of digits contributing to the accuracy of the final result.

Section~\ref{sec:review} reviews the stochastic arithmetic methods.
Section~\ref{sec:problem} formulates the problem rigorously and defines
several interesting scopes of study.
Then we provide in section~\ref{sec:normal} a statistical analysis for normal distributions  and in section~\ref{sec:general} for general distributions. Section~\ref{sec:experiments} validates our statistical framework on two industrial scientific computing codes: Europlexus and code\_aster. Finally, section~\ref{sec:limitations} discusses some of the remaining limitations of stochastic arithmetic methods, which should be addressed in future work. 

\section{Background on Stochastic Arithmetic Methods}
\label{sec:review}

Automatic methods for deriving bounds on round-off errors can be loosely categorized into two categories: exact methods and approximate methods.

Exact methods give a conservative and proven bound on the error of a computation. One well established exact method for deriving error bounds is Interval Arithmetic~\cite{moore2009introduction}, in which each real value in the algorithm is replaced by an interval that contains all the possible values of the computation. The operations are redefined to handle intervals operands and guarantee that the resulting interval provide rigorous bounds on the computation. Multiple software frameworks~\cite{rump1999intlab,revol2005motivations} for interval arithmetic have been released. Interval arithmetic have been applied to derive error bounds and optimize numerical methods~\cite{moore1979methods,kahan1996improbability}, linear algebra~\cite{hansen1965interval}, and physical simulation~\cite{dessombz2001analysis}. Because intervals are conservative, they tend to become overly large when the algorithm or control flow is complex. It is possible to refine the analysis by considering a union of interval subdivisions~\cite{hickey2001interval} or more sophisticate objects such as zonotopes~\cite{goubault2006static}; nevertheless in general for complex computer programs of thousands of code lines, deriving such an analysis is intractable. Exact approaches also include floating point proof assistants~\cite{de2006assisted,boldo2009combining,boldo2011flocq} which can derive semi-automatic certified proofs on floating point errors on small programs. 

On the other hand, approximate methods, do not provide deterministic bounds on the numerical error and may not always model exactly IEEE-754 behavior but are able to efficiently analyze large and complex programs, such as found in industrial codebases. For a more detailed comparison between stochastic and exact numerical analysis methods please refer to \cite{kahan1996improbability} and \cite[p.~71]{PARKER1997}.
This paper contributions focus solely on the stochastic arithmetic methods, which we will present in the following.

\subsection{Modeling accuracy loss using randomness}

When a program is run on an IEEE-754-compliant processor, the result of each
floating-point operation $x\circ y$ is replaced by a rounded value:
\mbox{$\text{round}(x\circ y)$}. For example, the default rounding mode for
IEEE-754 binary formats is given by:
\begin{align*}
  \text{round}(x)
   &= \lfloor x\rceil,
\end{align*}
where $\lfloor.\rceil$ denotes rounding to nearest, ties to even, for the
considered precision (\texttt{binary32} or \texttt{binary64}).

\medskip

When $x\circ y\neq \lfloor x\circ y\rceil$, \textit{i.e.} when $x\circ y$ is
not in the set $\mathbb{F}$ of representable FP values for the considered
precision, rounding causes a loss of accuracy. Stochastic arithmetic methods
model this loss of accuracy using randomness.

\subsubsection{CESTAC}\strut
The CESTAC method models round-off errors by replacing rounding operations with randomly rounded ones~\cite{VIGNES1974,chesneauxvignes}. The result of each FP operation \mbox{$x\circ y$} is substituted with
\mbox{$\text{random\_round}(x\circ y)$}, where $\text{random\_round}$ is a function
which randomly rounds FP values upwards or downwards equiprobably:
\begin{align*}
  \text{random\_round}(x)
  &=
    \left|\begin{array}{lcl}
            x &~~&\text{if $x\in\mathbb{F}$}\\[0.5em]
            \zeta \, \lfloor x\rfloor + (1-\zeta) \, \lceil x\rceil
            && \text{otherwise},
          \end{array}\right.
\end{align*}
where $\lfloor.\rfloor$ and $\lceil.\rceil$ respectively represent the downward and
upward rounding operations for the considered precision, and $\zeta$ is a random
variable such that \mbox{$\mathbb P[\zeta=0]=\mathbb P[\zeta=1]=\frac{1}{2}$}. CESTAC proposes different
variants by changing the probability distribution $\zeta$. 

\subsubsection{Monte Carlo Arithmetic}\strut
MCA can simulate the effect of different FP precisions by
operating at a virtual precision $t$. To model errors on a FP value $x$ at
virtual precision~$t$, MCA uses the noise function
\begin{align*}
  \text{inexact}(x) = x + 2^{e_x-t}\xi,
\end{align*}
where $e_x = \lfloor \log_2 \left|x\right| \rfloor +1 $ is the order of magnitude of $x$ 
and $\xi$ is a uniformly distributed random variable in the 
range $\left(-\frac{1}{2},\frac{1}{2}\right)$. During the MCA run of a given program, 
the result of each FP operation is replaced by a perturbed computation 
modeling the losses of accuracy~\cite{PARKER1997,verificarlo,frechtling2014tool}.
It allows to simulate the computation at
virtual precision $t$. Three possible expressions can be substituted to \mbox{$x\circ y$}, defining variants of MCA:
\begin{enumerate}
\item ``Random rounding'' only introduces perturbation on the output:\\\strut\hfill \(\text{round}(\text{inexact}(x\circ y))\)
\item ``Inbound'' only introduces perturbation on the input:\\\strut\hfill \(\text{round}(\text{inexact}(x)\circ \text{inexact}(y))\) 
\item ``Full MCA'' introduces perturbation on operand(s) and the result:\\\strut\hfill
\mbox{\(\text{round}(\text{inexact}(\text{inexact}(x)\circ \text{inexact}(y)))\)}
\end{enumerate}

\bigskip

In any case, using stochastic arithmetic, the 
result of each FP operation is replaced with a random variable modeling the
losses of accuracy resulting from the use of finite-precision FP
computations. Since the result of each FP operation in the program is in turn
used as input for the following FP operations, it is natural to assume that the
outputs of the whole program in stochastic arithmetic are random variables.

Stochastic Arithmetic methods run the program multiple times in order to
produce a set of output results (\textit{i.e.} a set of realizations or samples of the
random variable modeling the program output). The samples are then statistically
analyzed in order to assess the quality of the result.

\subsection{Estimating the result quality: significant digits}

Let us denote by $x$ the quantity computed by a deterministic 
numerical program. Different values can be defined for this result:
\begin{itemize}
\item $x_{\text{real}}$ is the value of $x$ that would be computed with an
  infinitely precise, real arithmetic;
\item $x_{\text{\sc ieee}}$ is the value that is computed by the program, when run
  on a computer that uses standard IEEE arithmetic with default rounding;
\item ${X_1, X_2, \ldots, X_n}$ are the values returned by $n$ runs of the
  program using stochastic arithmetic. These are seen as $n$ realizations of the
  same random variable $X$.
\end{itemize}

Figure~\ref{fig:schema} illustrates some of the quantities of interest that can
be useful to analyze the quality of the results given by the program.  The real
density of random variable $X$ is unknown, but some of its characteristics can
be estimated using $n$ sample values $(X_1, \ldots, X_n)$. In particular:
\begin{itemize}
\item the expected value $\mu = E[X]$ can be estimated by the empirical average
  value of ${X_i}$, $\hat\mu = \frac{1}{n} \; \sum_{i=1}^n X_i$;
\item the standard deviation $\sigma = \sqrt{E[(X - \mu)^2]}$ can be estimated
  by the empirical standard deviation, $\hat\sigma = \sqrt{\frac{1}{n-1} \; \sum_{i=1}^n \left(X_i-\hat\mu\right)^2}$.
\end{itemize}

\begin{figure}[ht]
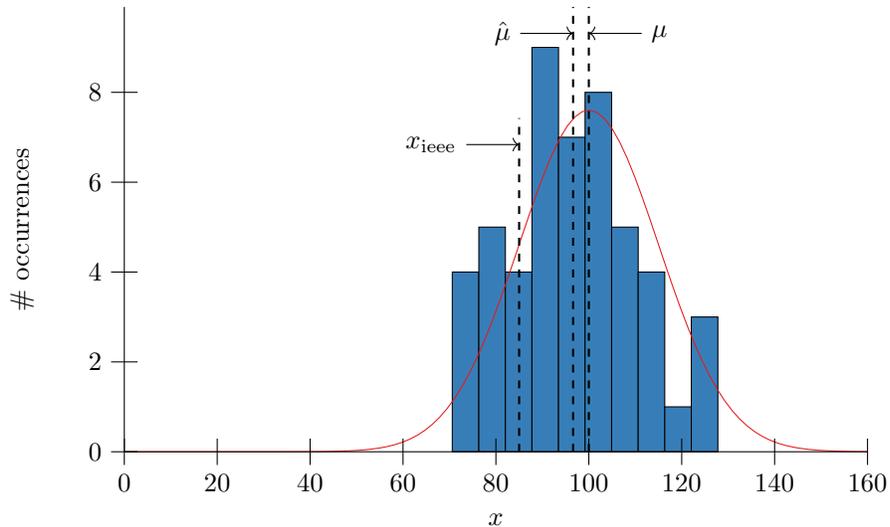

  \centering
  \plotz[width=\textwidth,height=20em,x=1.07828,y=0.78358]{schema}
  \caption{Schematic view of the various quantities of interest when evaluating
    the numerical quality of a result using stochastic arithmetic}
  \label{fig:schema}
\end{figure}

To estimate the numerical quality of the result, we would like to compute the
number of \emph{significant} bits. In the following we review the definitions
of \emph{significance} used in CESTAC and MCA. Then, we introduce the definition
that will be used in this paper.

\subsubsection{CESTAC definition of significant bits}\strut

In CESTAC, the average $\hat \mu$ of the small set of samples (usually three) is
taken as the computed result, and the analysis then estimates the accuracy of
this quantity, seen as an approximation of $x_{\text{real}}$.

\begin{mydef}\label{def:significant.cestac}
  With the notations defined above, the CESTAC number of exact significant
  bits~\cite{VIGNES2004} is defined as the number of bits in common between
  $x_{\text{real}}$ and $\hat\mu$:
  \begin{align*}
    s_ {\text{\sc cestac}}
    = -\log_2\left|\frac { x_{\text{real}} - \hat\mu}{x_{\text{real}}}\right|.
  \end{align*}
\end{mydef}

\bigskip

In order to estimate the number of exact significant digits, the CESTAC analysis is based on two hypotheses:
\begin{enumerate}
\item the distribution $X$ is normal, and
\item the distribution $X$ is centered on the real result $\mu = x_{\text{real}}$.
\end{enumerate}

Since $X$ is assumed normal, one can derive the following Student t-distribution interval with confidence $(1-\alpha)$:
\begin{align*}
\mu \in \left[\hat \mu \pm \frac{\tau_n \, \hat \sigma}{\sqrt{n}}\right],
\end{align*}
where $\tau_n$ is the $1-\frac{\alpha}{2}$ quantile of the Student distribution with $n-1$ degrees of freedom.

The maximum error between $\mu$ and $\hat\mu$ is bounded by this interval for a
normal distribution; it follows~\cite{Li2013Numerical} that an estimated lower
bound for the number of exact significant bits is given by
\begin{align}
  \label{eq:scestac}
  s_ {\text{\sc cestac}}
  = -\log_2\left|\frac { \mu - \hat\mu}{\mu}\right|
  \approx -\log_2\left|\frac { \mu - \hat\mu}{\hat \mu}\right|
  \geqslant \underbrace{-\log_2\left(\frac{\tau_n \, \hat \sigma}{\sqrt{n} \; |\hat \mu|}\right)}_{\hat s_{\text{\sc cestac}}}.
\end{align}

This definition suffers from a few shortcomings.  First, the two hypotheses,
while reasonable in many cases, do not always hold~\cite{chatelin88,kahan1996improbability}: 
Stott Parker shows that the
normality assumption of $X$ is not always true~\cite[p.~49]{PARKER1997} and it
is not necessarily centered on the real result. The robustness of CESTAC with respect to violations of these hypotheses is discussed in~\cite{chesneauxvignes}.

Second, and more important, the CESTAC definition of the number of significant digits may not necessarily be the most useful for the practitioner. Oftentimes, the objective of the numerical verification process consists in
evaluating the precision of the actual IEEE computer arithmetic.  CESTAC does
not evaluate the number of significant digits of the IEEE result but rather of the average
of the CESTAC samples. But in practice, $x_{\text{\sc ieee}}$ does not match
$\hat\mu$.

Last, with this definition, a problem clearly appears when considering the asymptotic behavior
of the bound: $\hat s_{\text{\sc cestac}} \xrightarrow[n\to+\infty]{} +\infty$.
 Increasing the number of samples arbitrarily increases the number of significant digits computed by CESTAC. On the one hand, this is expected because, according to the definition proposed, any computation is actually infinitely precise when $n\to\infty$ since the strong law of large numbers states that the empirical average is in this case almost surely the expected value. On the other hand, however, this asymptotic case also questions the pertinence of the CESTAC metric for the evaluation of the quality of the results produced by IEEE-754 computations. CESTAC is usually applied to three samples~\cite{chesneauxvignes}, the validity of the special case where $n=3$ is discussed in section~\ref{sec:normal-summary}.

\subsubsection{MCA definition of significant bits}\strut
\label{sec:parker-s}

In his study of MCA, Stott Parker proposes another definition for the number of
significant digits. Stott Parker lays this definition on the habits of biology
and physics regarding the precision of a measurement: if an MCA-instrumented
program is seen as a measurement instrument\footnote{In most applications, a measurement is modeled by a random variable following a normal distribution.}, then the number of significant
digits can be defined as the number of digits expected to be found in agreement
between successive runs/measurements.

\begin{mydef}\label{def:significant.mca}
  With the notations defined above, the MCA number of significant bits is defined as
  \begin{align*}
  s_{\text{\sc mca}} = -\log_{2}\left|{\frac{ \sigma}{\mu}}\right|.
  \end{align*}
\end{mydef}

\bigskip

This definition, which computes the magnitude of the coefficient of variation, is a form of signal to noise ratio: if most random samples share the same first digits, these digits
can be considered significant. On the contrary, digits varying randomly among
sampled results are considered noise. Another way of giving meaning to this
definition is to consider~$x_{\text{\sc ieee}}$ as one possible realization of
the random variable~$X$. As such, its distance to~$\mu$ is characterized by~$\sigma$. 
A problem with the MCA definition of significant bits is that it is empirical: the actual meaning of ``significance'' is not clearly laid, as well as the consequences one can draw from it.

\medskip

The MCA number of significant bits can be estimated by
\begin{align}
  \label{eq:smca}
  \hat s_{\text{\sc mca}} = -\log_{2}\left|{\frac{\hat \sigma}{\hat \mu}}\right|,
\end{align}
a quantity which can be computed regardless of any hypothesis on the distribution of $X$. However,
since the number of samples $n$ is finite, $\hat s_{\text{\sc mca}}$ is only an
approximation of the exact value $s_{\text{\sc mca}}$. And no confidence interval
is provided in order to help choose an appropriate number of samples. 
As shall be seen in section~\ref{sec:normal-summary}, this estimate is nevertheless a good basis when the underlying phenomenon is normal.

\bigskip

Our definition of significance generalizes Stott Parker's.  It requires a reference
which can be either a scalar value\footnote{Good choices for the scalar
  reference are $x_{\text{real}}$, $x_{\text{\sc ieee}}$ or $\mu_X$ depending on the aims of
  the study} or another random variable\footnote{Using a second random variable as reference allows comparing two versions of a program or two algorithms variants, more details are given in section~\ref{sec:problem}.}.
  Informally, in this paper, the significant
digits are the digits in common between the samples of $X$ and the
reference (up to a rounding effect on the last significant digit). Section~\ref{sec:problem} formalizes this definition in a
probabilistic framework and provides sound confidence intervals.

\subsection{Software tools presentation}
The experimental validation of the presented confidence intervals on synthetic and industrial use cases has been conducted thanks to the Verificarlo and Verrou tools which are presented in the next subsections.

\subsubsection{Verificarlo}

Verificarlo~\cite{verificarlo,verificarloproject} is an open-source tool based on the LLVM compiler framework replacing at compilation each floating point operation by custom operators. After compilation, the program can be linked against various backends~\cite{Chatelain2018veritracer, Chatelain2019automatic,Defour2020CustomPrecision}, including MCA to explore random rounding and virtual precision impact on an application accuracy.

Doing the interposition at compiler level allows to take into account the compiler optimization effect on the generated FP operation flow. Furthermore, it allows to reduce the cost of this interposition by optimizing its integration with the original code.

\subsubsection{Verrou}

Verrou \cite{fevotte2016,verrouproject} is an open-source floating point diagnostics tool. It is based on Valgrind~\cite{nethercote2007} to transparently intercept floating point operations at runtime and replace them by their random rounding counterpart.
The interposition at runtime allows to address
large and complex-code applications with no intervention of the end-user.

Verrou also 
provides two methods allowing to locate the origin of precision losses in the sources of the analyzed computing code.   
The first one is based on the code coverage comparison between
two samples. Discrepancies in the code coverage are good indicators of potential branch instabilities.
The second localization method leverages the delta-debugging algorithm~\cite{zeller2009} to perform a binary search to find a maximal scope for which MCA perturbations  do not produce errors or large changes in results. The remaining symbols (or lines if the binary is compiled with debug mode) are good candidates for correction.

\subsection{Synthetic example: Ill-conditioned linear system}
\label{sec:example}

To illustrate these methods in the following we use
a simple synthetic example proposed by Kahan~\cite{KAHAN66}: solving an
ill-conditioned linear system,
\begin{equation}
\label{eq::kahan_lin_system}
\left( \begin{array}{cc}
0.2161& 0.1441  \\
1.2969 & 0.8648  \\
\end{array} \right) x =
\left( \begin{array}{c}
0.1440   \\
0.8642   \\
\end{array} \right)
\end{equation}
The exact and IEEE \texttt{binary64} solutions of equation~\eqref{eq::kahan_lin_system} are:
\begin{equation}
 x_{\text{real}} =
\left( \begin{array}{c}
2   \\
-2   \\
\end{array} \right)
\qquad
 x_{\text{\sc ieee}} =
\left( \begin{array}{c}
1.9999999958366637 \\
-1.9999999972244424 \\
\end{array} \right)
\end{equation}

To keep the example simple, the floating-point solution $x_{\text{\sc ieee}}$ has been
obtained by solving the system with the naive C implementation of Cramer's formula
in double precision, as shown in listing~\ref{lst:cramer}.

\begin{lstlisting}[language=C, caption={Solving 2x2 system $a.x = b$ with Cramer's rule}, label={lst:cramer}]
void solve(const double a[4], const double b[2], double x[2]) {
  double  det = a[0] * a[3] - a[2] * a[1];
  double det0 = b[0] * a[3] - b[1] * a[1];
  double det1 = a[0] * b[1] - a[2] * b[0];
  x[0] = det0/det;
  x[1] = det1/det;
}
\end{lstlisting}

The condition number of the above system is approximately $2.5\times 10^8$, therefore we expect to
lose at least $\log_2(2.5\times 10^8) \approx 28$ bits of accuracy or,
equivalently, $8$ decimal digits. By comparing the IEEE and exact values, we
see that indeed the last $8$ decimal digits differ. The number of common bits
between $x_{\text{real}}$ and $x_{\text{\sc ieee}}$ is given by
\begin{align*}
  s_{\text{\sc ieee}}
  = -\log_2\left\vert\frac{x_{\text{real}}-x_{\text{\sc ieee}}}{x_{\text{real}}}\right\vert
  \approx \left(\begin{array}{c}28.8 \\ 29.4\end{array}\right).
\end{align*}

Now let us use MCA to estimate the number of significant digits.  We compile the
above program with Verificarlo~\cite{verificarlo} which transparently replaces
every FP operation by its noisy MCA counterpart. Here a virtual precision of 52
is used to simulate roundoff errors. Then, we run the
produced binary \mbox{$n = 10\,000$} times and observe the resulting output distribution $X$.

Both $X[0]$ and $X[1]$ are normal with high Shapiro-Wilk test p-values 73~\% and
74~\% respectively.\footnote{Interestingly $X[0]$ fails the Anderson-Darling
  test, 27~\% p-value, due to some anomalies on the tail.}
Figure~\ref{fig:normality-cramer1} shows the distribution and
quantile-quantile~(QQ) plots for $X[0]$, for which the empirical average and
standard deviation are given by
\begin{align*}
  \hat\mu &\approx 1.99999999909,\\
  \hat\sigma &\approx 5.3427\times 10^{-9}.
\end{align*}

\begin{figure}
\centering
\includegraphics[width=\linewidth]{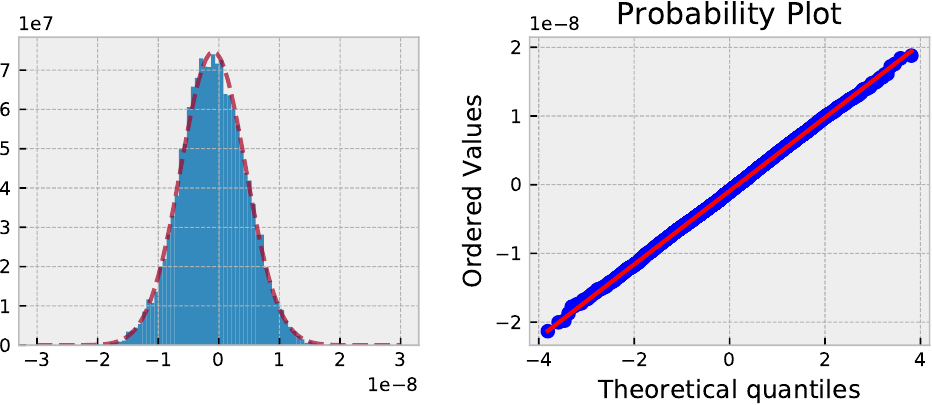}
\caption{Normality of Cramer $X[0]$ sample}
\label{fig:normality-cramer1}
\end{figure}

Using Stott Parker's formula~\eqref{eq:smca} to compute
$\hat s_{\text{\sc mca}}$ for $X[0]$, we get a figure close to the expected
value $28.8$:
\begin{align}
  \label{eq:shat}
  \hat s_{\textrm{MCA}} = -\log_{2}{\left|\frac{\hat \sigma}{\hat \mu}\right|} \approx 28.48.
\end{align}
But how confident are we that $\hat s_{\text{\sc mca}}$ is a good estimate of
$s_{\text{\sc mca}}$? Could we have used a smaller number of samples and still
get a reliable estimate of the results quality?

\bigskip

On the other hand, using these $n=10\,000$ samples to compute the CESTAC lower
bound defined in equation~\eqref{eq:scestac} with confidence 95~\% gives
\begin{align}
  \label{eq:sbound}
  \hat s_{\text{\sc cestac}}^{(10\,000)} = -\log_2\left(\frac{\tau_n\,\hat \sigma}{\sqrt{n} \, |\hat \mu|}\right) \approx 34.2,
\end{align}
which is a clear overestimation of the quality of the IEEE result, but also of
the CESTAC result, since the number of bits in common between the real result
and the sample average is given by
\begin{align*}
  s_{\text{\sc cestac}}^{(10\,000)}
  &= -\log_2\left\vert\frac{x_{\text{real}}[0]-\hat\mu}{x_{\text{real}}[0]}\right\vert
    \approx 31.0.
\end{align*}
Such a large $n$ exhibits the bias between $\mu$ and $x_{\text{real}}[0]$, invalidating the CESTAC hypotheses. 
In practice, CESTAC implementations such as CADNA use $n=3$, a choice
of which the validity is discussed in section~\ref{sec:normal-summary}. For the Cramer benchmark, computing $\hat s_{\text{\sc cestac}}$ for
only 3 samples of X[0] yields a conservative estimate:
\begin{align*}
\hat s_{\text{\sc cestac}}^{(3)} &\approx 27.5 \leqslant s_{\text{\sc cestac}}^{(3)} \approx 28.5.
\end{align*}

\bigskip

In the following, we present a novel probabilistic formulation to get a confidence interval for the number of significant bits with and without assumption of normality.

\section{Probabilistic accuracy of a computation}
\label{sec:problem}

\subsection{Definitions}

We consider one output of a program performing FP operations as a random variable $X$.
The output is a random variable either because the program is inherently nondeterministic or because we are artificially introducing numerical errors through MCA, CESTAC, or another stochastic arithmetic model.
We want to study how the probabilistic properties of the computation impact its accuracy.
The real distribution of $X$ is unknown but we can approximate it with  $n$ samples, $X_1 \ldots X_n$.

The accuracy of a result must be defined against a reference value. If a real mathematical result is known, it is a natural choice. If the program is deterministic when executed in IEEE arithmetic, the IEEE result is one straightforward choice for the reference value. If the program is nondeterministic, one can also choose as reference, the empirical average of $X$. Finally a third option consists in computing the accuracy against a second random variable $Y$, which allows computing the accuracy between runs of the same program or allows finding the accuracy between two different programs, such as when comparing two different versions or implementations of an algorithm.
We will write the reference value $x$ when it is a constant and $Y$ when it is another random variable. 

Four types of studies can be led, depending on whether we are interested in absolute or relative error, and whether we have a reference value. For each study we can model the errors as a random variable $Z$ defined as follows

\begin{center}
\begin{tabular}{lll}
\toprule
				  &reference $x$ & reference $Y$ \\
absolute precision& $Z=X-x$       & $Z=X-Y$       \\
relative precision& $Z=X/x-1$     & $Z=X/Y-1$    \\
\bottomrule
\end{tabular}
\end{center}

We have reduced the four types of problems to study the probabilistic properties of $Z$ whose error distribution represents the error of a computation in a broad sense. If $X$ is unbiased \emph{wrt.} the reference, $\mathbb E[Z]=0$ (a constant reference $x$ can be for instance the exact result of a computation that introduces a bias, for instance when a division by a value close to 0 occurs; in such cases we observe $\mathbb E[X]\neq x$).

To define the significance of a digit we use Stott Parker's $\frac12$ulsp algorithm~\cite[p.~19]{PARKER1997}. The significant bit is at the rightmost position at which the digits differ by less than one half unit in the last place. That is to say, two values $x$ and $y$ have $s$ significant digits\footnote{A non-strict inequality is often used in this definition. Under the assumption made by most previous works that the underlying distribution is continuous (\emph{e.g.} normal), both definitions agree. We chose a strict inequality that better fits with the notion of contributing digit that we introduce.} iff
\begin{align}
|x-y| <& \frac{1}{2} \times 2^{e_y-s} = 2^{-s+(e_y-1)} &\text{(scaled absolute error)} \nonumber\\ 
|x/y - 1| <& \frac{1}{2} \times 2^{1-s} = 2^{-s}   &\text{(relative error)} \label{eq:ulsp}
\end{align}
Without loss of generality, to unify the definition for the relative and scaled absolute cases, in the following sections 
we assume $e_y=1$. When working with absolute errors, one should therefore shift the number of digits by $(e_y-1)$, the normalizing term\footnote{When $Y$ is a random variable, we choose $e_Y = \lfloor \log_2 \left|E[Y]\right| \rfloor + 1$.}.

The first quantity of interest is the \textbf{probability that the result is significant up to a given
  bit} for a stochastic computation. The stochastic computation can be for example a program instrumented with CESTAC or MCA. By generalizing equation~\ref{eq:ulsp} to random variables, we define the probability of the $k$-th digit being significant as $\mathbb P\left(|Z| < 2^{-k}\right)$.

\begin{mydef}\label{def:significant}
For a given stochastic computation, the $k$-th bit of $Z$ is said to be significant with probability $p$ if 
\begin{align*}
  \mathbb P\left(\vphantom\sum |Z| < 2^{-k}\right) \geqslant p.
\end{align*}

The number of significant digits in $Z$ with probability $p$ is defined as the largest number $s_{\text{\rm sto}}\in\mathbb{R}$ such that 
\begin{align*}
  \mathbb P\left(\vphantom\sum |Z| < 2^{-s_{\text{\rm sto}}}\right) \geqslant p.
\end{align*}
\end{mydef}
\bigskip

Note that, by definition, if the $k$-th bit of $Z$ is significant with probability $p$, then any bit of rank $k^\prime \leqslant k$ is also significant with probability $p$. In the remainder of this paper, when not otherwise specified, the simple notation $s$ will refer to the $s_{\text{sto}}$ notion defined above.

The second quantity we will consider is the \textbf{probability that a given bit contributes to the precision of the result}: even if a bit on its left is already wrong, a bit can either improve the result precision, or deteriorate it. As noted in~\cite[p.45]{PARKER1997}: \emph{''In other words, in inexact values it can be worthwhile to carry a nontrivial number $r$ of random least significant bits''}. Because the expected result of $Z$ is 0, a bit will improve the accuracy if it is 0 and deteriorate it if it is 1.

\begin{mydef}
  The $k$-th bit of $Z$ is said to be contributing with probability~$p$ if and only if it is~$0$ with this probability, \textit{i.e.} if and only if
  \begin{align*}
  \mathbb P\left(\left \lfloor{2^{k}\left|Z\right|}\right \rfloor \text{\rm is even}\right) \geqslant p.
  \end{align*}
\end{mydef}
\bigskip

Now, the $k$-th bit of $Z$ is~0 if and only if there exists an integer $i$ such that,
\begin{align}
  & \left \lfloor{2^{k}\left|Z\right|}\right \rfloor = 2i  \nonumber \\ 
\Leftrightarrow\mkern40mu & 2i \leqslant 2^{k}\left|Z\right| < 2i+1  \nonumber \\ 
\Leftrightarrow\mkern40mu & 2^{-k}(2i) \leqslant \left|Z\right| < 2^{-k}(2i+1). \label{eq:contribution-formula}
\end{align}

One should note that the notions of significance and contribution are distinct, but related: if there are $s$~significant bits with probability~$p$, then all bits at ranks~$c\leqslant s$ are contributing, with probability~$p$. Indeed,
\begin{align*}
& \mathbb P\left(|Z| < 2^{-s}\right) \geqslant p \\
\Rightarrow\mkern40mu & \forall c \leqslant s, \; \mathbb P\left(2^{c}\,|Z| < 1\right) \geqslant p \\ 
\Rightarrow\mkern40mu & \forall c \leqslant s, \; \mathbb P\left(\left\lfloor\vphantom\sum 2^{c}\,|Z| \right\rfloor = 2 \times 0\right) \geqslant p.
\end{align*}

However, the $k$-th bit of $Z$ being contributing with probability $p$ does not imply that all bits at ranks~$k^\prime<k$ are also contributing\footnote{Although, it is the case for example when $Z$ follows a Gaussian distribution.}. This prevents the definition of such a notion as the number of contributing bits.

\bigskip

In the following, we study these two properties, \textbf{significant} and \textbf{contributing} bits, under the normality assumption (section~\ref{sec:normal}) and in the general case (section~\ref{sec:general}).

\subsection{Summary of the results}

In the remainder of this paper, we obtain the following results, unifying the various definitions of significance seen above and generalizing their validity to the non-Gaussian case.

\medskip

{\bf Under the Centered Normality Hypothesis (CNH)}, \textit{i.e.} if $X$ follows a Gaussian law centered around the reference value or, equivalently, if $Z$ follows a Gaussian law centered around $0$, it is shown in section~\ref{sec:significant} that a lower bound of the number of significant digits~$s_{\text{sto}}$ (as introduced in definition~\ref{def:significant}) with probability~$p$ and confidence level~$1-\alpha$ is given by
\begin{align*}
  \hat{s}_{\text{\sc cnh}}
  = -\log_2\left(\hat\sigma_Z\right)
    - \left[\frac{1}{2}\log_2\left(\frac{n-1}{\chi_{1-\alpha/2}^2}\right)
  +\log_2\left(F^{-1}\left(\frac{p+1}2\right)\right)\right],
\end{align*}
where $\hat\sigma_Z$ denotes the standard deviation of $n$ samples of $Z$, and all notations are relatively standard and are introduced in section~\ref{sec:normal}. Furthermore, the following results are established:
\begin{itemize}
\item $\hat{s}_{\text{\sc cnh}}$ can be computed simply by shifting the usual
  $\hat{s}_{\text{\sc mca}}$ estimator (introduced in
  definition~\ref{def:significant.mca}) by a certain number of bits, depending
  on $n$, $p$ and $\alpha$ and tabulated in Appendix~\ref{sec:abaque},
  Table~\ref{tab:shift};
\item Consequently, $\hat{s}_{\text{\sc mca}}$ can be re-interpreted in this
  framework; it is a lower bound on the number of significant bits with a certain probability and a given confidence level;
\item Although $\hat{s}_{\text{\sc cestac}}$ was not originally meant to do so, it can also be re-interpreted as an estimate of $s_{\text{sto}}$. For example in the CADNA case, where the number of samples is set to $n=3$ and a 95\% confidence level is used, $\hat s_{\text{\sc cestac}}$ estimates $s_{\text{sto}}$ with probability $p=0.3$.
\end{itemize}

\bigskip

{\bf In the general case}, when no assumption can be made about the
distributions of $X$ or $Z$, we introduce the Bernoulli significant bits estimator
\begin{align*}
  \hat s_{\text{\sc b}} = \max \left\{ k \in \{ 1, 2, \ldots ,53\} \text{ such that }
  \forall i\in\left\{ 1, 2, \ldots ,n\right\}, \left|Z_i\right| < 2^{-k} \right\}.
\end{align*}
It is shown in section~\ref{sec:general} that $\hat s_{\text{\sc b}}$ provides a
sound lower bound for $s$, provided that the number of samples~$n$ is chosen
accordingly to the desired probability~$p$ and confidence level~$1-\alpha$:
\begin{align*}
  n \geqslant \frac{\ln(\alpha)}{\ln(p)}.
\end{align*}
The required number of samples is tabulated in Appendix~\ref{sec:abaque},
Table~\ref{tab:nsamples}

\bigskip

{\bf Regarding contributing bits}, it is proved in
section~\ref{sec:norm.contrib-bits} that all bits of rank
$k\leqslant\hat c_{\text{\sc cnh}}$ are contributing with probability~$p$ and confidence
level~$1-\alpha$ under the centered normality hypothesis, with
\begin{align*}
  \hat c_{\text{\sc cnh}} =
  -\log_2(\hat\sigma)
  -\left[\frac12\log_2\left(\frac{n-1}{\chi^2_{1-\alpha/2}}\right)
  +\log_2\left(p-\frac{1}{2}\right)
  +\log_2\left(2\sqrt{2\pi}\right)\right].
\end{align*}
In section~\ref{sec:general}, it is proved that in the general case, the $k$-th bit of $Z$ is contributing with probability~$p$ and confidence level~$1-\alpha$ if
\begin{align*}
  \forall i\in\left\{ 1, 2, \ldots,n\right\}, \left\lfloor 2^{k}\left|Z_i\right| \right\rfloor \textrm{is even},
\end{align*}
provided that the number of samples~$n$ follows the rules described above.

\bigskip

Impatient readers may skip to section~\ref{sec:experiments}, where these results
are used for the analysis of industrial calculations. The more mathematically
inclined reader is of course encouraged to follow along
sections~\ref{sec:normal} and~\ref{sec:general}, where these results are
detailed and proved.

\section{Accuracy under the Centered Normality Hypothesis}
\label{sec:normal}
In this section we consider that $Z$ is a random variable with normal distribution $\mathcal N(0, \sigma)$. In practice, we only know an empirical standard deviation~$\hat\sigma$, measured over $n$~samples. Because $Z$ is normal, the following confidence interval~\cite[p.~282]{saporta2006probabilites} with confidence $1 - \alpha$ based on the $\chi^2$ distribution with $(n-1)$ degrees of freedom is sound~\footnote{
This interval is bilateral. If we were only interested in a lower bound for significant and contributing bits we could use the unilateral bound $\sigma^2 \leqslant (n-1)\hat\sigma^2/{\chi_{1-\alpha}^2}$.}:

\begin{align}
  \dfrac{(n-1)\hat\sigma^2}{\chi_{\alpha/2}^2} \leqslant \sigma^2 \leqslant \dfrac{(n-1)\hat\sigma^2}{\chi_{1-\alpha/2}^2}.
  \label{eq:chi2}
\end{align}

It is important to note that $\sigma$ is the standard deviation of $Z$ and not of $X$.
For example, when taking a second independent random variable $Y$ as reference, if $X$ and $Y$ both follow a distribution $\mathcal N(\mu, \sigma')$, $Z=X-Y$ follows $\mathcal N(0, \sqrt2\sigma')$. 

\subsection{Significant bits}
\label{sec:significant}

The theorem below is a more precise restatement of Stott Parker's Theorem 1: \emph{``the difference in the orders of magnitude of the mean $\mu$ and the standard deviation $\sigma$ measures the number of significant digits of $X$ (if $\mu\neq0$, $\sigma\neq 0$).''} We define the notion of ``measuring the number of significant digits'' as the estimation of the probability that a given bit is significant at a given confidence level. We then prove that the number of significant bits is given by $-\log_2\frac\mu\sigma$ as exposed by Stott Parker (since in a relative precision analysis, $\sigma_Z=\frac{\sigma_X}{x}=\frac{\sigma}{\mu}$ if $X$ is normal and centered at the reference value), but adjusted by a quantity that depends only on the target probability and confidence level (so, constant \emph{wrt} the sample). This new formulation allows to assess the consequences of taking the considered bits into account or not.

\begin{theorem}\label{ThPsig} For a normal centered error distribution $Z \sim \mathcal N(0, \sigma)$, the $s$-th bit is significant with probability 
\label{th:significant}
\begin{align*}
p_s = 2F\left(\frac{2^{-s}}\sigma\right)-1,
\end{align*} 
with $F$ the cumulative distribution function of the normal distribution with mean 0 and variance 1.
\end{theorem}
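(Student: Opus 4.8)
The plan is to read off the probability directly from Definition~\ref{def:significant}, which states that the $s$-th bit of $Z$ is significant with probability equal to $\mathbb P\left(|Z| \leqslant 2^{-s}\right)$. So the entire task reduces to evaluating this single probability for a centered Gaussian $Z \sim \mathcal N(0,\sigma)$; no confidence-interval or estimation machinery is involved at this stage, since here $\sigma$ is the exact (not empirical) standard deviation.

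First I would rewrite the event $\{|Z| \leqslant 2^{-s}\}$ as the symmetric two-sided event $\{-2^{-s} \leqslant Z \leqslant 2^{-s}\}$, which is legitimate because $2^{-s} \geqslant 0$. Next I would standardize: since $Z/\sigma \sim \mathcal N(0,1)$, dividing through by $\sigma$ gives
\begin{align*}
  \mathbb P\left(|Z| \leqslant 2^{-s}\right)
  = \mathbb P\left(-\frac{2^{-s}}{\sigma} \leqslant \frac{Z}{\sigma} \leqslant \frac{2^{-s}}{\sigma}\right)
  = F\left(\frac{2^{-s}}{\sigma}\right) - F\left(-\frac{2^{-s}}{\sigma}\right),
\end{align*}
where $F$ is the standard normal cumulative distribution function.

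Finally I would invoke the symmetry of the standard normal density, which yields $F(-x) = 1 - F(x)$ for every $x$, and apply it with $x = 2^{-s}/\sigma$ to collapse the difference into $2F\left(2^{-s}/\sigma\right) - 1$, giving the claimed $p_s$. There is no genuine obstacle here: the result is an immediate consequence of the definition of significance combined with the standardization and symmetry of the Gaussian. The only point requiring a moment's attention is purely notational, namely that the bound $2^{-s}$ already incorporates the normalization $e_y = 1$ fixed in Section~\ref{sec:problem}, so that the statement applies verbatim in the relative-error case and, after the stated shift of $(e_y - 1)$ bits, in the absolute-error case as well.
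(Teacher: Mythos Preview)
Your proof is correct and follows essentially the same approach as the paper: both arguments evaluate $\mathbb P(|Z|\leqslant 2^{-s})$ using standardization of $Z/\sigma$ to $\mathcal N(0,1)$ together with the symmetry $F(-x)=1-F(x)$, differing only in the inconsequential order in which these two steps are applied.
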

\begin{proof}

The probability that the $k$-th bit is significant is $\mathbb P\left[\left|Z\right| < 2^{-k}\right]=\mathbb P\left[Z<2^{-k}\right]-\mathbb P\left[Z<-2^{-k}\right]$. Now $\mathbb P\left[Z<-2^{-k}\right]=1-\mathbb P\left[Z<2^{-k}\right]$ by symmetry of the normal distribution, so that $\mathbb P\left[\left|Z\right|<2^{-k}\right]=2\mathbb P\left[Z<2^{-k}\right]-1$. Therefore, 

$$\mathbb P\left[\left|Z\right|<2^{-k}\right]=2\mathbb P\left[\frac{Z}\sigma<\frac{2^{-k}}\sigma\right]-1=2F\left(\frac{2^{-k}}\sigma\right)-1.$$

\end{proof}
The number of significant digits with probability $p$ is $s$ such that $2F\left(\frac{2^{-s}}\sigma\right)-1=p$, \textit{i.e.} \mbox{$F\left(\frac{2^{-s}}\sigma\right)=\frac{p+1}2$} $\Leftrightarrow$ \mbox{$\frac{2^{-s}}\sigma=F^{-1}\left(\frac{p+1}2\right)$}, so that

\begin{align*}
s=-\log_2\left(\sigma\right)-\log_2\left(F^{-1}\left(\frac{p+1}2\right)\right).
\end{align*}

The above formula is remarkable because, whatever $\sigma$, the confidence interval to reach a given probability is constant and can be computed from a table for $F^{-1}$. 
Therefore, one just needs to subtract a fixed number of bits from $-\log_2(\sigma)$ to reach a given probability, as illustrated in figure~\ref{fig:significant-bits}.

In practice, only the sampled standard deviation~$\hat\sigma$ can be measured, but it can be used to bound~$\sigma$ thanks to the the $\chi^2$ confidence interval in equation~\eqref{eq:chi2}. This allows computing a sound lower bound~$\hat s_{\text{\sc cnh}}$ on the number of significant digits in the Centered Normality Hypothesis:
\begin{align}
  s
  &\geqslant\underbrace{
    -\log_2\left(\hat\sigma\right)
    -\underbrace{
    \left[\frac{1}{2}\log_2\left(\frac{n-1}{\chi_{1-\alpha/2}^2}\right)
    +\log_2\left(F^{-1}\left(\frac{p+1}2\right)\right)\right]}_{\delta_{\text{\sc cnh}}}}_{\hat{s}_{\text{\sc cnh}}}. \label{eq:scnh}
\end{align}

Again, this formula is interesting since $\hat s_{\text{\sc cnh}}$ can be determined by just measuring the sample standard deviation~$\hat\sigma$ and shifting $-\log_2(\hat\sigma)$ by a value $\delta_{\text{\sc cnh}}$, which only depends on a few parameters: the size of the sample~$n$, the confidence $1-\alpha$ and the probability~$p$. Some values for this shift are tabulated in appendix~\ref{sec:abaque}, table~\ref{tab:shift}. This is an improvement over the proposition of~\cite[p.23]{PARKER1997} to use a confidence interval on the estimate of $\mu$. Instead, we propose a confidence interval directly on the quantity of interest, namely, the number of significant digits.

\begin{figure}
\centering
\includegraphics[width=.8\linewidth]{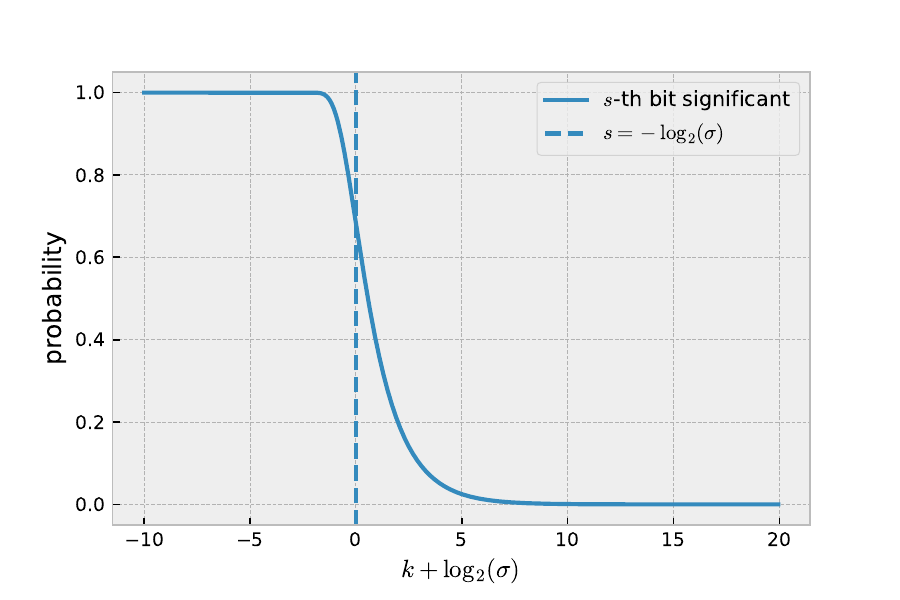}
\caption{ Profile of the significant bit curve: when the dashed line is positioned on the $-\log_2\sigma$ abscissa, the curve corresponds to the probability that the result is significant up to a given bit.}
\label{fig:significant-bits}
\end{figure}

\begin{figure}
\centering
\includegraphics[width=.48\linewidth]{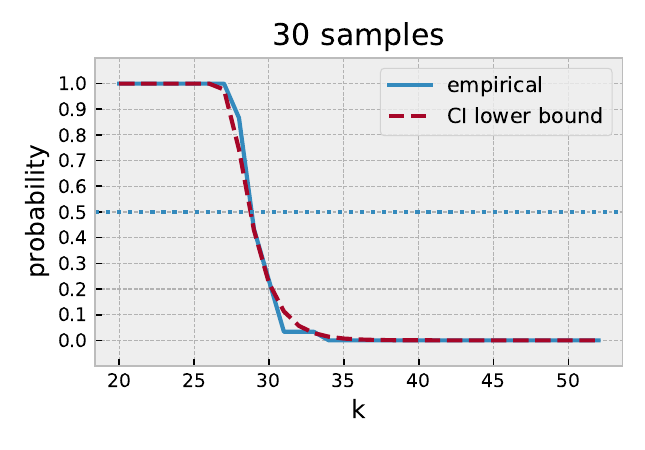}
\includegraphics[width=.48\linewidth]{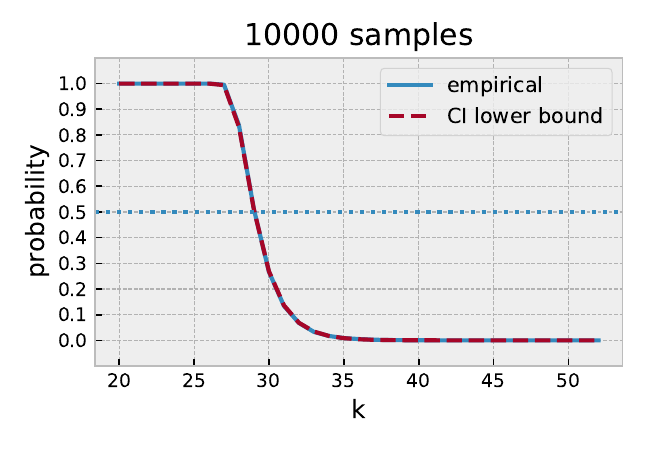}\caption{Significant bits for Cramer $x[0]$ variable computed under the normal hypothesis using 30 and 10000 samples. The Confidence Interval (CI) lower bound is computed by using the probability of theorem~\ref{th:significant} and bounding $\sigma$ with a 95\% Chi-2 confidence interval.}
\label{fig:cramer-0-sb-predicted}
\end{figure}

\paragraph{Application}
Let us consider the $X[0]$ variable from the the ill-conditioned Cramer system from section~\ref{sec:example}. Statistical tests did not reject the normality hypothesis for $X[0]$. Here we would like to compute the number of significant digits relative to the mean of the sample with a 99~\% probability. Following section~\ref{sec:problem}, we consider the relative error, $Z=\frac{X[0]}{\hat\mu}-1 \rightarrow \mathcal N(0, \sigma)$. Here $\sigma$ will be estimated from $\hat \sigma$ with the $\chi^2$ 95~\% confidence interval presented in equation~\eqref{eq:chi2}. Computing $\delta_{\text{\sc cnh}}$ for $n=10~000$, $p=0.99$ and $1-\alpha=0.95$ (or reading it in table~\ref{tab:shift}), yields $\delta_{\text{\sc cnh}} \approx 1.4$. Recalling the sampled measurements from section~\ref{sec:example}, we get $-\log_2(\hat\sigma) \approx 28.5$.

Therefore, at least $28.5-1.4=27.1$~bits are significant, with probability 99~\% at a 95~\% confidence level.
Figure~\ref{fig:cramer-0-sb-predicted} shows that the proposed confidence interval closely matches the empirical probability on the $X[0]$ samples. When the number of samples increases, the confidence interval tightness increases.

\subsection{Contributing Bits}\label{sec:norm.contrib-bits}

\begin{figure}
\centering\includegraphics[width=.9\linewidth]{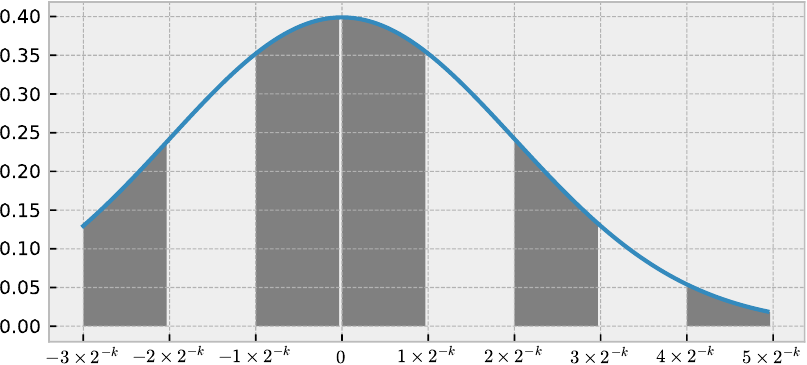}
\caption{Normal curve; the gray zones correspond to the area where the $k$-th bit contributes to make the result closer to 0 (whatever the preceding digits).}
\label{fig:norm-contributing}
\end{figure}

In the previous section we computed the number of significant bits. Now we are interested in the number of contributing bits: even if a bit is after the last significant digit, it may still contribute partially to the accuracy if it brings the result closer to the reference value. 

The theorem below gives an approximation of the number of contributing bits which has the same property as theorem~\ref{ThPsig}: this approximation computes the number of bits to shift from $-\log_2(\hat\sigma)$ to obtain the contributing bits based on the same few parameters (sample size $n$, confidence $1-\alpha$ and probability $p$), and the shift being independent of $\hat{\sigma}$.

\begin{theorem} For a normal centered error distribution $Z \sim \mathcal N(0, \sigma)$, 
when $\frac{2^{-c}}\sigma\rightarrow 0$, the $c$-th bit contributes to the result accuracy with probability 
\begin{align*}
p_c\sim \frac{2^{-c}}{2\sigma\sqrt{2\pi}} + \frac{1}{2}.
\end{align*}
\end{theorem}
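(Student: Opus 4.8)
The plan is to express $p_c$ as the integral of the Gaussian density over the contributing (``gray'') region of Figure~\ref{fig:norm-contributing}, and then to extract the leading correction to $\tfrac12$ by an integration by parts. First, using the characterization in equation~\eqref{eq:contribution-formula} and writing $h=2^{-c}$, the $c$-th bit of $|Z|$ is $0$ exactly when $|Z|$ lands in one of the intervals $[2ih,(2i+1)h)$, $i\geqslant 0$. Since $Z\sim\mathcal N(0,\sigma)$ is symmetric with density $f(z)=\frac{1}{\sigma\sqrt{2\pi}}e^{-z^2/2\sigma^2}$, I would write
\begin{align*}
p_c = 2\sum_{i=0}^\infty \int_{2ih}^{(2i+1)h} f(z)\,dz.
\end{align*}
Setting $I_j=\int_{jh}^{(j+1)h}f(z)\,dz$ and using $\int_0^\infty f=\tfrac12$, the quantity of interest becomes an alternating sum,
\begin{align*}
2p_c-1 = 2\sum_{j=0}^\infty (-1)^j I_j = 2\int_0^\infty w(z)\,f(z)\,dz,
\end{align*}
where $w(z)=(-1)^{\lfloor z/h\rfloor}$ is the square wave equal to $+1$ on gray intervals and $-1$ on white ones. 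The whole problem reduces to estimating this oscillatory integral when $h/\sigma$ is small.

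Second, I would integrate by parts against the antiderivative $W(z)=\int_0^z w(t)\,dt$, the nonnegative triangle wave of period $2h$ oscillating between $0$ and $h$, whose average over each period is $h/2$. Since $W(0)=0$ and $f(z)\to 0$, the boundary terms vanish and
\begin{align*}
\int_0^\infty w\,f = -\int_0^\infty W(z)\,f'(z)\,dz.
\end{align*}
Splitting $W=\tfrac{h}{2}+\tilde W$ into its mean and a zero-mean oscillation $\tilde W$, the mean part integrates exactly: $-\tfrac h2\int_0^\infty f'(z)\,dz = \tfrac h2 f(0)$. This already produces the announced leading term, since $2\cdot\tfrac h2 f(0)=h f(0)=\frac{2^{-c}}{\sigma\sqrt{2\pi}}$, i.e. $p_c\approx \tfrac12+\frac{2^{-c}}{2\sigma\sqrt{2\pi}}$. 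Intuitively, away from the origin the gray and white intervals come in adjacent pairs whose contributions nearly cancel because $f$ is almost constant on the scale $h$; the surplus comes entirely from the unpaired gray interval $[0,h)$ sitting at the peak of the folded density.

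Third --- and this is the step I expect to be the real obstacle --- I must show that the oscillatory remainder $\int_0^\infty \tilde W(z)\,f'(z)\,dz$ is genuinely of lower order, so that it may be discarded. The mechanism is that $\tilde W$ has \emph{exact} zero average over each period $[2ih,(2i+2)h]$ and amplitude $O(h)$, whereas $f'$ varies only on the scale $\sigma\gg h$: replacing $f'$ by its value at the left endpoint of each period annihilates the contribution, and the error per period is controlled by the oscillation of $f'$ there, i.e. by a factor $h^2\|f''\|_\infty$. Summing over the $O(\sigma/h)$ periods carrying appreciable mass yields a remainder smaller than the main term $hf(0)$ by a factor of order $h/\sigma$, hence negligible precisely when $\frac{2^{-c}}{\sigma}$ is small. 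Making this Riemann--Lebesgue/Euler--Maclaurin-type estimate fully rigorous on the half line (including uniform control of the tail and of the last partial period) is the only delicate point; the leading-order identity $-\tfrac h2\int_0^\infty f' = \tfrac h2 f(0)$ is exact. Combining the two pieces gives $2p_c-1\sim hf(0)$, which is exactly the claimed equivalent $p_c\sim \frac{2^{-c}}{2\sigma\sqrt{2\pi}}+\frac12$.
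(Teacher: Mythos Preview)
Your argument is correct and reaches the same asymptotic, but by a genuinely different route than the paper. The paper applies the trapezoidal rule to each block $u_{(k,i)}=\int_{i\,2^{-k}}^{(i+1)2^{-k}}f$, so that the alternating sum of the trapezoidal parts telescopes exactly to $2^{-k-1}f(0)$, and then bounds the alternating sum of the $O(h^3 f'')$ error terms by a four-case analysis around the maximum of $f''$ at $\sqrt3\,\sigma$; this yields the explicit two-sided enclosure
\[
\frac{1}{2}+\frac{2^{-k}}{2\sigma\sqrt{2\pi}}-\frac{(2^{-k})^3(4e^{-3/2}+1)}{12\sigma^3\sqrt{2\pi}}
\;\leqslant\; p_k \;\leqslant\;
\frac{1}{2}+\frac{2^{-k}}{2\sigma\sqrt{2\pi}}+\frac{(2^{-k})^3(4e^{-3/2})}{12\sigma^3\sqrt{2\pi}}.
\]
Your integration-by-parts against the square wave is cleaner conceptually: the mean $h/2$ of the triangle wave $W$ produces $\tfrac{h}{2}f(0)$ exactly, and the zero-mean oscillation $\tilde W$ is handled by one more integration by parts (using $f'(0)=0$ for the Gaussian) to get a remainder bounded by $\lVert\tilde V\rVert_\infty\!\int_0^\infty|f''|=O(h^2/\sigma^2)$. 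What the paper's approach buys is the explicit cubic-in-$h$ constants used downstream for the practical error envelope in Figure~\ref{fig:approx}; what yours buys is a shorter, more structural argument that would apply verbatim to any smooth symmetric density, not just the Gaussian. Either way the leading term is the same, and your identification of the remainder control as the only nontrivial step is accurate.
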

\begin{proof}
As shown in equation~\ref{eq:contribution-formula},  the $k$-th bit of $Z$ contributes if and only if there exists an integer~$i\geqslant0$ such that $2^{-k}(2i) \leqslant |Z| < 2^{-k}(2i+1)$, \emph{i.e.} there exists an integer $i\geqslant 0$ such that $2^{-k}(2i) \leqslant Z < 2^{-k}(2i+1)$ or $i<0$ such that $2^{-k}(2i) < Z \leqslant 2^{-k}(2i+1)$. $Z$ being continuous, $\mathbb P[2^{-k}(2i) < Z \leqslant 2^{-k}(2i+1)]=\mathbb P[2^{-k}(2i) \leqslant Z < 2^{-k}(2i+1)]$.

For a normal centered $Z$ distribution, this inequality corresponds to the gray stripes in figure~\ref{fig:norm-contributing}. Let us write the integral of one stripe as, 

\begin{align*}
u_{(k, 2i)}=\mathbb P\left[2^{-k}(2i)\leqslant Z<2^{-k}(2i+1)\right]=\int_{2^{-k}(2i)}^{2^{-k}(2i+1)}f(x)\;\text{d}x,
\end{align*}

where $f(x)=\frac{e^{-\frac{x^2}{2\sigma^2}}}{\sigma\sqrt{2\pi}}$ is the probability density function of $\mathcal N(0, \sigma)$. The probability of contribution for the $k$-th bit, $p_k$, is therefore

\begin{align*}
&p_k  = \sum_{i\in\mathbb{Z}} u_{(k, 2i)} = 1-\sum_{i\in\mathbb{Z}} u_{(k, 2i+1)} \\
\Leftrightarrow\qquad&2p_k = 1 + \sum_{i\in\mathbb{Z}} {\left(u_{(k, 2i)}-u_{(k, 2i+1)} \right)} \\
\Leftrightarrow\qquad& p_k = \frac{1}{2} + \sum_{i\geqslant 0}{\left(u_{(k, 2i)} - u_{(k, 2i+1)}\right)}= \frac{1}{2} + \sum_{i\geqslant 0}(-1)^i u_{(k,i)} &&\textrm{ (by symmetry of $f$).} \\
\end{align*}

Now, according to the \emph{trapezoidal rule}, there exists $\xi_{(k, i)}$ in the interval $I_k^i = [2^{-k}(i), 2^{-k}(i+1)]$ such that

\begin{align*}
u_{(k, i)} = \int_{2^{-k}i}^{2^{-k}(i+1)}f(x) \; \text{d}x = 2^{-k}\left(\frac{f\left(2^{-k}i\right)+f\left(2^{-k}(i+1)\right)}{2}\right)+\frac{\left(2^{-k}\right)^3}{12}f''(\xi_{(k, i)}).
\end{align*}

Introducing $v_{(k, i)}=2^{-k}\left(\frac{f\left(2^{-k}i\right)+f\left(2^{-k}(i+1)\right)}{2}\right)$ and $w_{(k, i)}=\frac{\left(2^{-k}\right)^3}{12}f''(\xi_{(k, i)})$, we have $u_{(k, i)}=v_{(k, i)}+w_{(k, i)}$, and 
\begin{align*}p_k = \frac{1}{2} + \sum_{i\geqslant 0}(-1)^i v_{(k,i)}+ \sum_{i\geqslant 0}(-1)^i w_{(k,i)}.\end{align*}

Now we compute the alternate series $\sum_{i\geqslant 0} (-1)^i v_{(k, i)}$. Since the series is alternate and its terms tend to 0, all term cancellations are sound. All trapezoidal terms cancel, except the first half-term:

\begin{align*} 
\sum_{i\geqslant 0} (-1)^i v_{(k, i)} &=\sum_{i\geqslant 0} (-1)^i 2^{-k}\left(\frac{f\left(2^{-k}i\right)+f\left(2^{-k}(i+1)\right)}{2}\right)\\
&=2^{-k-1}\sum_{i\geqslant 0} (-1)^i \left(f\left(2^{-k}i\right)+f\left(2^{-k}(i+1)\right)\right)\\
&=2^{-k-1}f(0)=\frac{2^{-k}}{2\sigma\sqrt{2\pi}}.
\end{align*}

We can also sum the error terms $w_{(k,i)}$. We have:

\begin{align}
\frac{\left(2^{-k}\right)^3}{12}\min_{\xi \in I_k^i} f''(\xi)\leqslant w_{(k,i)}\leqslant\frac{\left(2^{-k}\right)^3}{12}\max_{\xi \in I_k^i} f''(\xi) \label{eq:errors}.
\end{align}

The second derivative $f''(\xi)=\frac{(\xi^2/\sigma^4-1/\sigma^2)}{\sigma\sqrt{2\pi}}e^{-\frac{\xi^2}{2\sigma^2}}$ is increasing between 0 and $\sqrt3\sigma$ and decreasing from $\sqrt3\sigma$ onward. We distinguish four possible cases, based on the monotony of $f''$ (if $f''$ is increasing on $I_k^i$,  $\min_{\xi \in I_k^i} f''(\xi)=f''(2^{-k}i)$, $\max_{\xi \in I_k^i} f''(\xi)=f''(2^{-k}(i+1))$; if it is decreasing, $\min_{\xi \in I_k^i} f''(\xi)=f''(2^{-k}(i+1))$, $\max_{\xi \in I_k^i} f''(\xi)=f''(2^{-k}i)$; and if $\sqrt3\sigma\in I_k^i$, $\max_{\xi \in I_k^i} f''(\xi)=f''(\sqrt3\sigma)$):
\begin{itemize}
\item \textbf{Case 1:} when $2^{-k}(2i+2)\leqslant\sqrt3\sigma$, then
\begin{align*}
\frac{\left(2^{-k}\right)^3}{12}f''(2^{-k}(2i)) &\leqslant v_{(k,2i)}\leqslant\frac{\left(2^{-k}\right)^3}{12}f''(2^{-k}(2i+1))\\
-\frac{\left(2^{-k}\right)^3}{12}f''(2^{-k}(2i+2)) &\leqslant -v_{(k,2i+1)}\leqslant-\frac{\left(2^{-k}\right)^3}{12}f''(2^{-k}(2i+1))
\end{align*}

\item \textbf{Case 2:} when $2^{-k}(2i)\leqslant\sqrt3\sigma\leqslant2^{-k}(2i+1)$ then
\begin{align*}
\frac{\left(2^{-k}\right)^3}{12}f''(2^{-k}(2i)) &\leqslant v_{(k,2i)}\leqslant\frac{\left(2^{-k}\right)^3}{12}f''(\sqrt3\sigma)\\
-\frac{\left(2^{-k}\right)^3}{12}f''(2^{-k}(2i+1)) &\leqslant -v_{(k,2i+1)}\leqslant-\frac{\left(2^{-k}\right)^3}{12}f''(2^{-k}(2i+2))
\end{align*}

\item \textbf{Case 3:} when $2^{-k}(2i+1)\leqslant\sqrt3\sigma\leqslant2^{-k}(2i+2)$ then
\begin{align*}
\frac{\left(2^{-k}\right)^3}{12}f''(2^{-k}(2i)) &\leqslant v_{(k,2i)}\leqslant\frac{\left(2^{-k}\right)^3}{12}f''(2^{-k}(2i+1))\\
-\frac{\left(2^{-k}\right)^3}{12}f''(\sqrt3\sigma) &\leqslant -v_{(k,2i+1)}\leqslant-\frac{\left(2^{-k}\right)^3}{12}f''(2^{-k}(2i+1))
\end{align*}

\item \textbf{Case 4:} when $2^{-k}(2i)\geqslant\sqrt3\sigma$, then
\begin{align*}
\frac{\left(2^{-k}\right)^3}{12}f''(2^{-k}(2i+1)) &\leqslant v_{(k,2i)}\leqslant\frac{\left(2^{-k}\right)^3}{12}f''(2^{-k}(2i))\\
-\frac{\left(2^{-k}\right)^3}{12}f''(2^{-k}(2i+1)) &\leqslant -v_{(k,2i+1)}\leqslant-\frac{\left(2^{-k}\right)^3}{12}f''(2^{-k}(2i+2))
\end{align*}

\end{itemize}

The bounds on these error terms also cancel two by two, except the first one, and the two terms around $\sqrt3\sigma$ (we consider cases with $2^{-k}<\sqrt3\sigma\Leftrightarrow \frac{2^{-k}}{\sigma}<\sqrt3$, so that the maximum of $f''$ is not in the first stripe). At worst, $-\frac{\left(2^{-k}\right)^3}{12}f''(\sqrt3\sigma)$ and $-\frac{\left(2^{-k}\right)^3}{12}f''(2^{-k}(2i+1))$ with $2i+1=\lceil\sqrt3\sigma\rceil$  remain on the right side, and  $\frac{\left(2^{-k}\right)^3}{12}f''(\sqrt3\sigma)$ and $\frac{\left(2^{-k}\right)^3}{12}f''(2^{-k}(2i))$ with $2i=\lceil\sqrt3\sigma\rceil$  on the left (these worst cases cannot all occur at once).

All in all, we are left with (using that $f''$ reaches its maximum at $\sqrt3\sigma$):

\begin{align}
\frac{\left(2^{-k}\right)^3}{12}f''(0)-2\frac{\left(2^{-k}\right)^3}{12}f''(\sqrt3\sigma) &\leqslant \sum_{i\geqslant 0}(-1)^i w_{(k,i)}\leqslant 2\frac{\left(2^{-k}\right)^3}{12}f''(\sqrt3\sigma) \nonumber \\
-\frac{\left(2^{-k}\right)^3}{12\sigma^3\sqrt{2\pi}}-2\frac{\left(2^{-k}\right)^3}{12}\frac{2}{\sigma^3\sqrt{2\pi}}e^{-3/2} &\leqslant \sum_{i\geqslant 0}(-1)^i w_{(k,i)}\leqslant 2\frac{\left(2^{-k}\right)^3}{12}\frac{2}{\sigma^3\sqrt{2\pi}}e^{-3/2} \nonumber \end{align}

Finally:
\begin{align}
\frac{1}{2}+\frac{2^{-k}}{2\sigma\sqrt{2\pi}} -\frac{\left(2^{-k}\right)^3(4e^{-3/2}+1)}{12\sigma^3\sqrt{2\pi}} &\leqslant p_k\leqslant \frac{1}{2}+\frac{2^{-k}}{2\sigma\sqrt{2\pi}}  +\frac{\left(2^{-k}\right)^3(4e^{-3/2})}{12\sigma^3\sqrt{2\pi}} \label{eq:encadrement}
\end{align}

It follows that, when $\frac{2^{-k}}{\sigma}$ is small, bit $k$ contributes to the result accuracy with probability 
\begin{align}
p_k\sim \frac{2^{-k}}{2\sigma\sqrt{2\pi}} + \frac{1}{2}. \label{eq:pkeq}
\end{align}
\end{proof}

\begin{remark}
Equation \ref{eq:encadrement} shows that $p_k$ is closer to $\frac{2^{-k}}{2\sigma\sqrt{2\pi}} + \frac{1}{2}$ than to $\frac{\left(2^{-k}\right)^3(4e^{-3/2}+1)}{12\sigma^3\sqrt{2\pi}}>\frac{\left(2^{-k}\right)^3(4e^{-3/2})}{12\sigma^3\sqrt{2\pi}}$. Now, $x\mapsto \frac{x^3(4e^{-\frac32})}{12\sqrt{2\pi}}$ is increasing, and for $x=\frac12$, less than 0.05.
Hence, for $\frac{2^{-k}}{\sigma}\leqslant\frac12$, 
\begin{align*}
 \frac{2^{-k}}{2\sigma\sqrt{2\pi}} + \frac{1}{2}-0.05< p_k< \frac{2^{-k}}{2\sigma\sqrt{2\pi}} + \frac{1}{2}+0.05. 
\end{align*}
\end{remark}

If we wish to keep only bits improving the result with a probability greater than $p$, then we will keep $c$ contributing bits, with
\begin{align}\label{eq:pkeq2}
c=-\log_2(\sigma)-\log_2\left(p-\frac{1}{2}\right)-\log_2\left(2\sqrt{2\pi}\right). 
\end{align} 

As above, this formula can be further refined by replacing $\sigma$ with $\hat\sigma$ and adding a term taking into account the confidence level:
\begin{align*}
  c\geqslant
  \underbrace{-\log_2(\hat\sigma)
  -\left[\frac12\log_2\left(\frac{n-1}{\chi^2_{1-\alpha/2}}\right)
  +\log_2\left(p-\frac{1}{2}\right)
  +\log_2\left(2\sqrt{2\pi}\right)\right]}_{\hat c_{\text{\sc cnh}}}.
\end{align*}

\begin{figure}
\begin{center}\includegraphics[width=.6\linewidth]{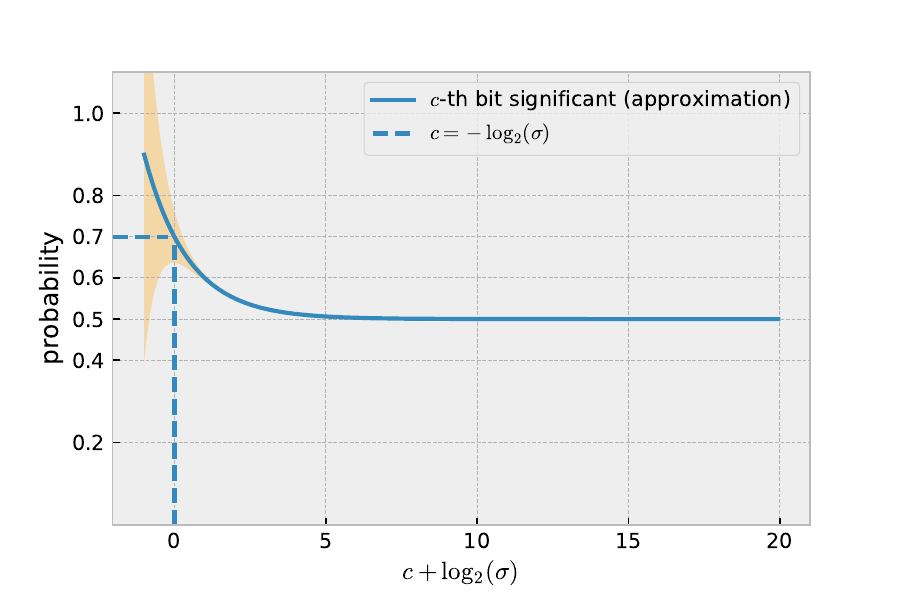}\end{center}
\caption{Profile of the contribution bit curve: when the dashed line is positioned on the $-\log_2\sigma$ abscissa, the curve corresponds to the approximation~\ref{eq:pkeq} of the probability that the bit contributes to the result accuracy. The shaded area represents the bound on the error given by equation~\ref{eq:encadrement}. 
}
\label{fig:approx}
\end{figure}

Figure~\ref{fig:approx} plots the approximation of equation~\ref{eq:pkeq}.
We note that for a centered normal distribution the probability of contribution decreases monotonically  towards $0.5$. Close to $0.5$, bits become more and more indistinguishable from random noise since their 
probability is not affected by the computation.

The approximation of equation~\ref{eq:pkeq} is tight for $k > -\log_2\sigma$: in this case, the absolute error of the approximation formula is less than $2~\%$. 
The probability of contribution at 
$k=-\log_2\sigma$ is $0.7$. Therefore, equation~\ref{eq:pkeq2} can be safely used for probabilities less than $0.7$. In this paper, we want to find the limit after which bits are random noise. This limit corresponds to a probability of $0.5$ and the approximation is tight for $p<0.7$.

\paragraph{Application}

\begin{figure}
\centering
\includegraphics[width=.48\linewidth]{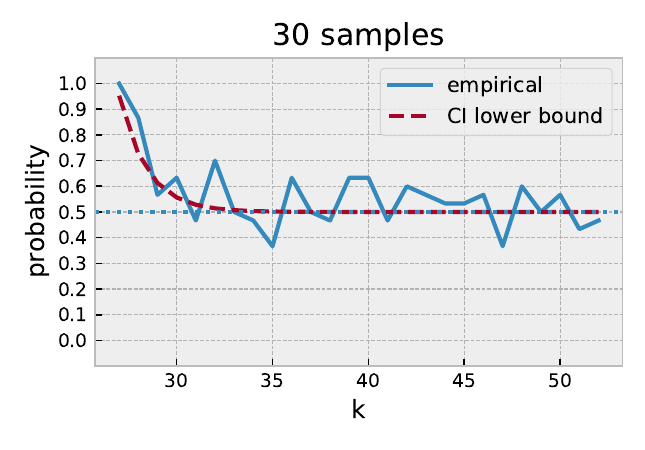}
\includegraphics[width=.48\linewidth]{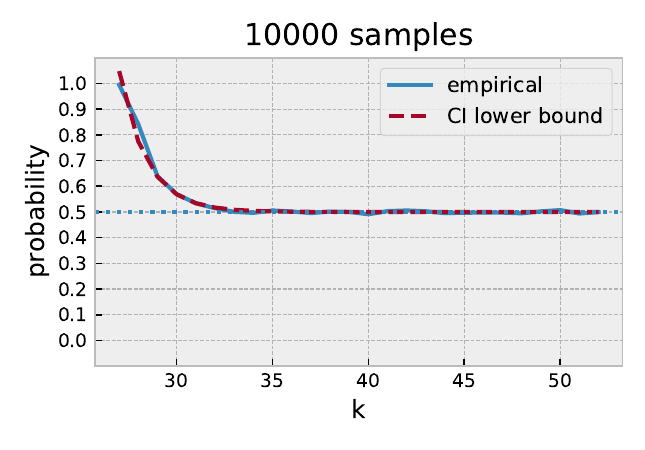}\caption{Contributing bits for Cramer $x[0]$ variable computed under the normal hypothesis using 30 and 10000 samples with the approximation of equation~\ref{eq:pkeq}.}
\label{fig:cramer-0-cb-predicted}
\end{figure}

Figure~\ref{fig:cramer-0-cb-predicted} shows that the approximation proposed in
this section tightly estimates the empirical samples in Cramer $x[0]$ example.

If we consider a 51~\% threshold for the contribution of the bits we wish to keep, then we should keep $c=-\log_2(\sigma)-\log_2(p-\frac{1}{2})-\log_2(2\sqrt{2\pi})=-\log_2(\sigma) + 4.318108$. As in section~\ref{sec:significant}, we estimate $-\log_2(\sigma)$ with a 95~\% Chi-2 confidence interval, and compute $c=32.8$.

This means that with probability 51\% the first 32 bits of the mantissa will round the result towards the correct reference value. After the 34th bit the chances of rounding correctly or incorrectly are even: the noise after the 34th bit is random and does not depend on the computation. Bits 34 onwards can be discarded.

\subsection{Summary of results for a  Normal Centered Distribution}
\label{sec:normal-summary}

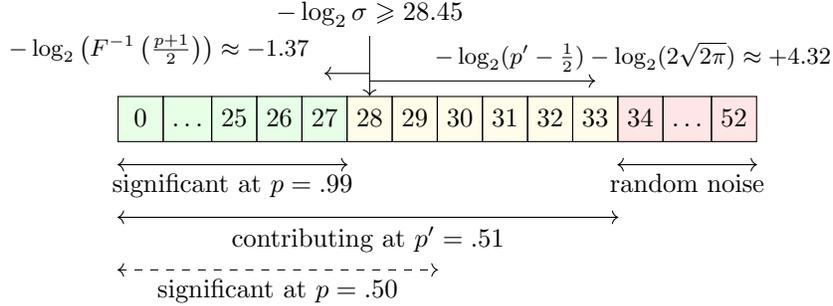
\begin{figure}
\begin{center}
\begin{tikzpicture}[
array/.style={matrix of nodes,nodes={draw, minimum size=6mm, text height=1.5ex,  text depth=.25ex, anchor=center}, column sep=-\pgflinewidth, row sep=.5mm, 
nodes in empty cells}]

\matrix[array] (array) {
  0 & $\ldots$ & 25 & 26 & 27 & 28 & 29 & 30 & 31 & 32 & 33 & 34 & $\ldots$ & 52\\
};

\begin{scope}[on background layer]
\fill[green!10] (array-1-1.north west) rectangle (array-1-5.south east);
\fill[yellow!10] (array-1-6.north west) rectangle (array-1-11.south east);
\fill[red!10] (array-1-12.north west) rectangle (array-1-14.south east);
\end{scope}

\draw[<-] (array-1-6.north)--++(90:8mm) node [above] (first) {$-\log_2\sigma \geqslant 28.45$};

\draw[->]([yshift=2mm]array-1-6.north) -- node[above, font=\small, xshift=20mm] {$-\log_2(p'-\frac{1}{2})-\log_2(2\sqrt{2\pi}) \approx +4.32$} ([yshift=2mm]array-1-11.north);

\draw[->]([yshift=3mm]array-1-6.north) -- node[above, font=\small, xshift=-25mm] {$-\log_2 \left(F^{-1}\left(\frac{p+1}{2}\right)\right) \approx -1.37$} ([yshift=3mm]array-1-5.north);

\draw[<->]([yshift=-3mm]array-1-1.south west) -- node[below] {significant at $p=.99$} ([yshift=-3mm]array-1-5.south east);

\draw[<->]([yshift=-10mm]array-1-1.south west) -- node[below] {contributing at $p'=.51$} ([yshift=-10mm]array-1-11.south east);
\draw[dashed, <->]([yshift=-17mm]array-1-1.south west) -- node[below] {significant at $p=.50$} ([yshift=-17mm]array-1-7.south east);
\draw[<->]([yshift=-3mm]array-1-12.south west) -- node[below] {random noise} ([yshift=-3mm]array-1-14.south east);

\end{tikzpicture}
\end{center}
\caption{Summary of results on the $X[0]$ Cramer example.  $28.45 \approx -\log_2\hat\sigma - \frac{1}{2}\log_2\left(\frac{n-1}{\chi_{1-\alpha/2}^2}\right)$}
\label{fig:mantissaexample}
\end{figure}

Under the normality hypothesis, the quantity $-\log_2\frac{\sigma_X}{|\mu_X|}$ introduced by Stott Parker is pivotal, but needs to be refined. In our framework, Stott Parker's definition maps to $Z=\frac{X}{|\mu_X|}-1$, which computes the relative error to the mean. In this case Stott Parker's formula computes the position of the bit until which the result has 68~\% chance of being significant, and that contributes to the result precision with a probability of 70~\%.

However, from this bit, for each desired probability level, there is a simple way to compute a quantity by which to move back to be sure that the result is significant. It is also easy to compute a quantity by which to move forward in order to guarantee that all bits contributing more than a fixed level are kept.
Figure~\ref{fig:mantissaexample} demonstrates this on the Cramer's example. 

We recall from section~\ref{sec:example} that in this example $\hat s_{\text{mca}} = -\log_2\left|\frac{\hat\sigma_X}{\hat\mu_X}\right| = -\log_2(\hat\sigma) \approx 28.48$. First a lower bound, 28.45, on $-\log_2 \sigma$ is computed with the Chi-2 95~\% confidence interval. With this confidence level, it is a lower bound of $s_{\text{mca}}$ (as introduced in definition~\ref{def:significant.mca}). It is also a lower bound of $s_{\text{sto}}$ with 68~\% probability (as introduced in definition~\ref{def:significant}).  To compute a lower bound on bits that are significant with probability 99~\%, we simply subtract 1.37 from this number. By adding 4.32 to this number we get the number of bits that contribute or round towards the reference with a probability of 51~\%. The remaining bits in the mantissa are random noise.

It is important to understand the difference between contributing and significant bits. To illustrate this difference, we show in figure~\ref{fig:mantissaexample} the number of significant bits with 50~\% probability which we estimate at
29 bits (28.45 shifted by +0.57 bits). We deduce, since the probability of significant bits decreases monotonically, that bits in the range 30-33 are significant with a probability under 50~\%; in other words they are likely to be non significant. Yet \emph{taken individually}, these bits are contributing with probability over 51\%. Therefore, bits in the range 30-33 still contain useful information about the computation and cannot be considered random noise. It is up to the practitioner to decide how many bits to keep depending on their use-case. 

Taking this into account, we propose to give a result, by printing all contributing bits at the chosen probability and confidence levels and an annotation bounding the error term at the chosen probability and confidence levels.
This would result, for $k=-\log_2\sigma$, in the following: for an absolute error with $\lceil k+(e_y-1)+4.318108\rceil$ bits with the annotation $\pm 2^{\lfloor k+(e_y-1) - 1.365037\rfloor}$ at 99~\%;
for a relative error with $\lceil k+4.318108\rceil$ and $\pm 2^{\lfloor k-1.365037\rfloor} \times y$ at 99~\% for a relative error. In this notation, only digits that are likely to round correctly the final result with a probability greater than 1~\% are written; the error at probability 99~\% is written. In decimal, this notation takes up to two additional digits ($4.318108\times \log_{10}2\approx 1.29$~digits) that are probably wrong, but still have a chance to contribute to the result precision. As an example, using this notation to display the IEEE-754 result of Cramer's $X[0]$ yields, with 9 contributing digits and 8 significant digits:
\begin{center}
1.999999996 $\pm$ 1.4e-08 (at 99\% with confidence 95\%).
\end{center}
These 10~digits contain all the valuable information in the result, and are the only ones that it would make sense to save, for example in a checkpoint-restart scheme. 

\bigskip

Interestingly enough, the CESTAC definition of significance can be reinterpreted in this statistical framework.
Equation~\eqref{eq:scestac} defines the CESTAC estimator as
\begin{align*}
 \hat s_{\text{\sc cestac}} &= \log_2\left(\frac{\tau_n \, \hat \sigma_X}{\sqrt{n} \; |\hat \mu_X|}\right) \\
                            &= \log_2\hat \sigma - \log_2\frac{\tau_n}{\sqrt n} \quad \textrm{ (taking } Z = \frac{X}{|\mu_X|} -1 \textrm{)}
\end{align*}
This estimator was originally designed to estimate $s_{\text{\sc cestac}}$ (as introduced in definition~\ref{def:significant.cestac}) under the CESTAC model hypothesis. We showed previously that the formula tends to infinity when increasing the number of samples $n$. Yet CADNA~\cite{lamotte2010cadna_c}, the most popular library implementing CESTAC, sets $n=3$ and $1-\alpha=95~\%$. In this case,
\begin{align*}
\hat s_{\text{\sc cadna}} &\approx \log_2\hat \sigma - 1.31.
\end{align*}

Reinterpreting the -1.31 shift as the $\delta_{\text{\sc cnh}}$ term from equation~\eqref{eq:scnh}, we see that $\hat s_{\text{\sc cadna}}$ can be seen as an estimator for our stochastic definition of significant bits, $s_{\text{sto}}$, with probability 30.8\% at a 95\% confidence level. 
With only three samples, using $\hat\sigma$ as an estimator of $\sigma$ can introduce a strong error. The term $\frac12\log_2\left((n-1)/\chi^2_{1-\alpha/2}\right)$ which accounts for this error inside $\delta_{\text{\sc cnh}}$ becomes important, which explains the low probability (30.8\%) of the estimation. 

To mitigate this issue, it is recommended to take a safety margin of 1 decimal digit from the number of significant digits estimated by CADNA. In our formalism, shifting $\hat s_{\text{\sc cadna}}$ further by 1 decimal digit (or approximately 3.32 bits), the result can be
reinterpreted as a shift of $-1.31-3.32=-4.63$~bits, estimating $s_{\text{sto}}$ with probability over 99\% (with 95\% confidence).

\section{Accuracy in the General Case}
\label{sec:general}

The hypothesis that the distribution $Z$ is normal, or that it has expectation $0$ is not always true. We propose statistical tools to study the significance of bits as well as their contribution to the result accuracy that do not rely on any assumption regarding the distribution of the results.

To address the problem in the general case we reframe it in the context of Bernoulli estimation, which is interesting because:
\begin{itemize}
  \item it does not rely on any assumptions on the distribution of $Z$;

   \item it provides a strong confidence interval for determining the number
  of significant digits when using stochastic arithmetic methods;
  
  \item thanks to a more conservative bound, it allows to estimate {\it a priori} in all cases for a given probability and confidence a safe number of sample to draw from the Monte Carlo experiment.
  
\end{itemize}

\subsection{Background on Bernoulli estimation}

In the next section, we  restate the problem of estimating the number of significant bits as a series of estimations of Bernoulli parameters. We present here some basic results on such estimations.

Consider a sequence of independent identically distributed Bernoulli experiments with an unknown parameter $p$ and outcomes $(p_i)$. Each value of the parameter $p$ gives a model of this experiment, and, among them, we will only keep an interval of model parameters under which the probability of the given observation is greater than $\alpha$. The set of possible values for $p$ will then be called a confidence interval of level $1-\alpha$ for $p$ :
if the actual value of $p$ is not in the confidence interval computed from the outcome, it means that the observed outcome was an ``accident'' the probability of which is less than $\alpha$.

A case of particular interest in our study is the one when all experiments succeed. Then, the probability of this outcome is $p^n$ under the model that the Bernoulli parameter value is $p$. We then reject models (i.e., values of $p$) such that $p^n<\alpha\Leftrightarrow n\ln(p)<\ln(\alpha)$. Now, $\ln(p)\leqslant p-1$ and $\ln(p)\sim p-1$ is a first order approximation when $p$ is close to 1. Thus, taking $p<1+\frac{\ln(\alpha)}n$ leads to a probability of the observation less than $\alpha$, and one can reject these values of $p$. In particular, taking $1-\alpha=95~\%$, we keep values of $p$ greater than $1-\frac3n$, and $\left[1-\frac3n, 1\right]$ is a $95~\%$ confidence interval. This result is known in clinical trial's literature as the \emph{Rule of Three}~\cite{Eypasch619}. Vice versa, in an experiment with no negative outcome, one can conclude with confidence $1-\alpha$ that the probability of a positive outcome is greater than $p$ after $\left\lceil\frac{\ln(\alpha)}{\ln(p)}\right\rceil$ positive trials.

The general case can be dealt with by using the Central Limit Theorem, which shows that for a number $n$ of experiments large enough (with respect to $\widehat p=\frac1n\sum p_i$), $\sqrt{n}(\widehat p-p)/(\widehat p(1-\widehat p))$ is close to a Gaussian random variable with law $\mathcal N\left(0, 1\right)$. This approximation is known to be unfit in many cases, and can be improved by considering $\tilde{p}=\frac1{n+4}(\sum p_i+2)$ rather than $\hat p$ as shown by Brown et al.~\cite{brown2001interval} (this paper also presents other estimators to build confidence intervals in this situation; in particular, it proposes a revised method when $\tilde p$ is close to 0 or 1, a situation in which the confidence interval below may be overly optimistic). 
Then, with $F$ the cumulative distribution function of $\mathcal N(0, 1)$, $$\left[\tilde p-\sqrt{\tilde p(1-\tilde p)/n}F^{-1}(1-\alpha/2), \tilde p+\sqrt{\tilde p(1-\tilde p)/n}F^{-1}(1-\alpha/2)\right]$$ is an $1-\alpha$ confidence interval for $p$. If we focus on a lower bound on the parameter $p$, we can also use $\left[\tilde p-\sqrt{\tilde p(1-\tilde p)/n}F^{-1}(1-\alpha), 1\right]$ as a confidence interval of level $1-\alpha$.

Thus, from $n$ independent experiments, of which $n_s$ have been a success, we can affirm with confidence 95~\% that the probability of success is greater than $\frac{n_s+2}{n+4}-1.65\sqrt{\frac{(n_s+2)(n-n_s+2)}{(n+4)^3}}$. We can note that when $n_s=n$, this confidence interval is valid, but much more conservative than the one obtained above, that can thus be preferred in this particular case.

\subsection{Statistical formulation as Bernoulli trials}

Now, for each of the four discussed settings, presented in section~\ref{sec:problem}, we can form two series of Bernoulli trials based on collected data. 

\begin{center}
\begin{tabular}{lll}
\toprule
				  &reference $x$ & reference $Y$ \\
absolute precision& $Z=X-x$       & $Z=X-Y$      \\
relative precision& $Z=X/x-1$     & $Z=X/Y-1$    \\
\bottomrule
\end{tabular}
\end{center}

When the reference is a constant $x$, we consider $n$ samples $X_i$. We form $N$ pieces of data by computing $Z_i=X_i-x$ or $Z_i=X_i/x-1$ respectively. 

When the reference is another random variable $Y$, we form $N$ pieces of data by computing $Z_i=X_i-Y_i$ or $Z_i=X_i/Y_i-1$.
In the case where $X=Y$ and we study the distance between samples of a random process, this requires $2N$ samples from $X$.

From these $N$ pieces of data, we form Bernoulli trials by counting the number of success of 
\begin{align*}
  S_i^k= \indicator_{|Z_i|<2^{-k}}
\end{align*}
for studying $k$-th bit significance, and 
\begin{align*}
C_i^k=\indicator_{\lfloor 2^{k}\left|Z_i\right| \rfloor \textrm{ is even}}
\end{align*} 
for studying $k$-th bit contribution, where $\indicator$ is the indicator function.

From these two Bernoulli samples, the estimation can be made as above to determine the probability that the $k$-th bit is significant and the probability that it contributes to the result, for any $k$. The result can then be plotted as two probability plots, one for significance, the other for the contribution. The significance plot is non-increasing by construction, should start at 1 if at least one bit can be trusted, and tends to 0. The contribution plot should tend to $\frac{1}{2}$ in most cases, since the last digits are pure noise and are not affected by the computation.

\subsection{Evaluation}

The main goal of the Bernoulli formulation is to deal with non normal distributions.
In this section, we evaluate the Bernoulli estimate on Cramer's $X[0]$ samples which follow a normal distribution. This is to keep a consistent example across the whole paper and to compare the results with the Normal formulation estimates. Later, in section~\ref{sec:experiments}, we will apply the Bernoulli estimate to distributions produced by the industrial simulation codes EuroPlexus and code\_aster, some of which are not normal.

Figure~\ref{fig:accuracy-cramer-0} plots the significance and the contribution per bit probabilities for $X[0]$ using the Bernoulli estimation. The estimation closely matches the empirical results.
It is interesting to compare the Bernoulli estimates with 30 samples to the Normal estimates in figures~\ref{fig:cramer-0-sb-predicted} and~\ref{fig:cramer-0-cb-predicted}. The Bernoulli estimates are less tight and more conservative. This is expected since they do not build upon the normality assumption of the distribution.

\begin{figure}
\centering
\begin{subfigure}{.48\linewidth}
\includegraphics[width=\linewidth]{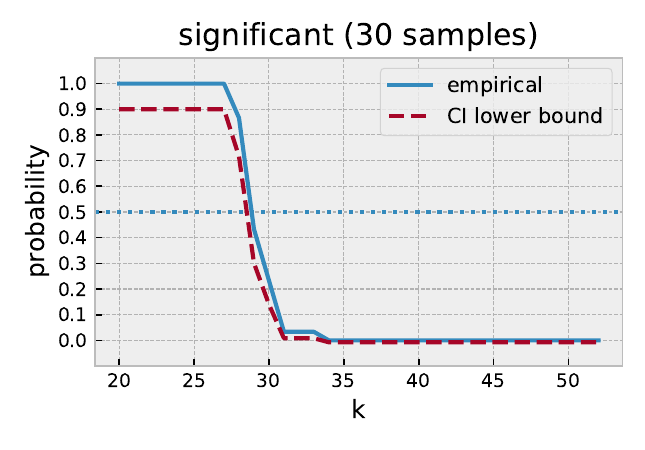}
\includegraphics[width=\linewidth]{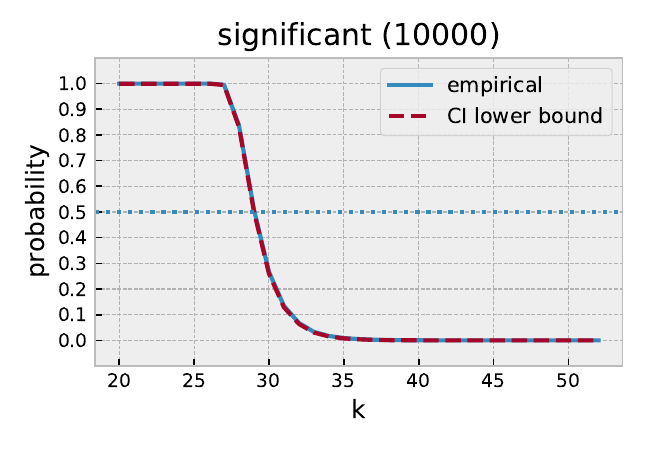}
\end{subfigure}\hfill
\begin{subfigure}{.48\linewidth}
\includegraphics[width=\linewidth]{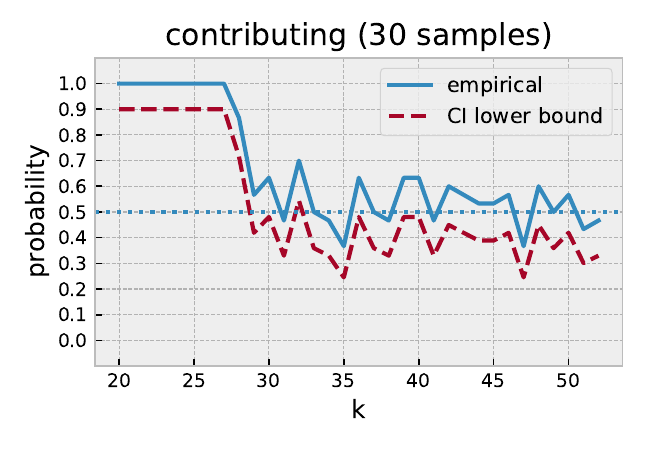}
\includegraphics[width=\linewidth]{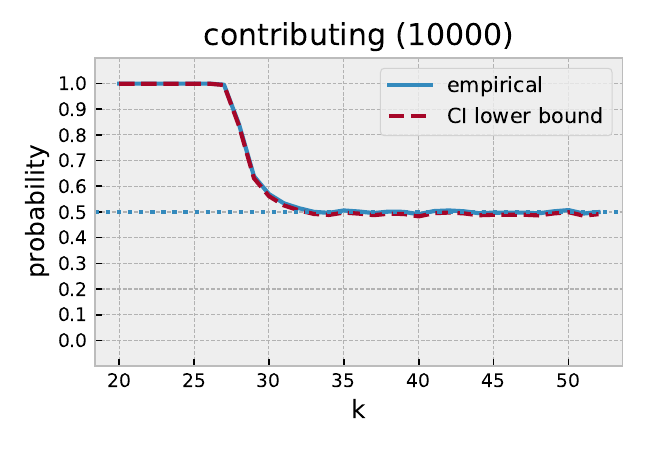}
\end{subfigure}
\caption{Significance and contribution per bit for variable $X[0]$ of the Cramer's system with 30 and 10000 samples.}
\label{fig:accuracy-cramer-0}
\end{figure}

If we are only interested in the number of significant digits, we can consider the Bernoulli trial with no failed outcomes since it provides an easy formula giving the required number of samples. In this case, the number of needed samples is $n = \lceil -\frac{\ln \alpha}{\ln p} \rceil$.  We then determine the maximal index~$k$ for which the first $k$~bits of all $n$~sampled results coincide with the reference:
\begin{align}
  \hat s_{\text{\sc b}} = \max \left\{ k \in \{ 1, 2, \ldots, 53\} \text{ such that } \forall i\in\left\{ 1, 2, \ldots ,n\right\}, S_i^k \text{ is true} \right\}.
  \label{eq:sb}
\end{align}

We applied this method to the $X[0]$ sample from section~\ref{sec:example}. Assuming a $(1-\alpha)=95\%$ confidence interval and a probability of $p = 99\%$  of getting $s$ significant digits, we gather $n=299$~samples. Among the collected samples, the $27$\textsuperscript{th} digit is sometimes different compared with the reference solution, but the first $26$ digits coincide for all samples. Therefore we conclude with probability $99\%$ and $95\%$ confidence, that the first $\hat s_{\text{\sc b}}=26$ binary digits are significant.

\section{Experiments on industrial use-cases}
\label{sec:experiments}

\subsection{Reproducibility analysis in the Europlexus Simulation Software}
\label{sec:epx}

In this section we show how to apply our methodology to study the numerical reproducibility of the state-of-the-art Europlexus~\cite{EPXweb} simulator. Europlexus
is a fast transient dynamic simulation software co-developed by the French
Commissariat \`a l'\'Energie Atomique et aux \'Energies Alternatives~(CEA), the
Joint Research Center~(JRC) of the European Commission, and other industrial and
academic partners. While its first lines of code date back to the mid seventies,
the current source code has grown to about 1 million lines of Fortran 77 and
Fortran 90. Europlexus runs in parallel on distributed memory architectures
through a domain decomposition strategy, and on shared memory architectures
through loop parallelism.

Europlexus has two main fields of application: simulation of severe accidents
in nuclear reactors to check the soundness of the mechanical confinement
barriers of the radioactive matters for the CEA; and simulation of explosions in
public places in order to measure their impact on the surrounding citizens and
structures for the JRC.

It handles several non-linearities, geometric or material, some of which lead to
a loss of unicity of the evolution problem considered. This is for example the
case for some configurations with frictional contact between structures, or when
the loadings cause fracture and fragmentation of the matter. Another obvious
source of bifurcations of the dynamical system is the dynamic buckling.

Due to the small errors introduced by the floating point arithmetic, the
introduction of parallel processing in Europlexus raises a difficulty for the
developer and the users: the solutions of a given simulation may differ
when changing the number of processors used for the computation.
We show here how the confidence intervals proposed in this paper help the developer to design relevant non-regression tests. To this end, we study in the following a simple case which could serve as a non-regression test, and which is symptomatic of a non-reproducibility related to FP arithmetic. It involves a vertical doubly clamped column to top and bottom plates. A vertical pressure is applied by lowering the top plate, which causes buckling of the column. The column is modeled as a set of discrete elements (here segments) connected at moving points called nodes.

\begin{figure}
  \centering
  \includegraphics[width=.75\textwidth]{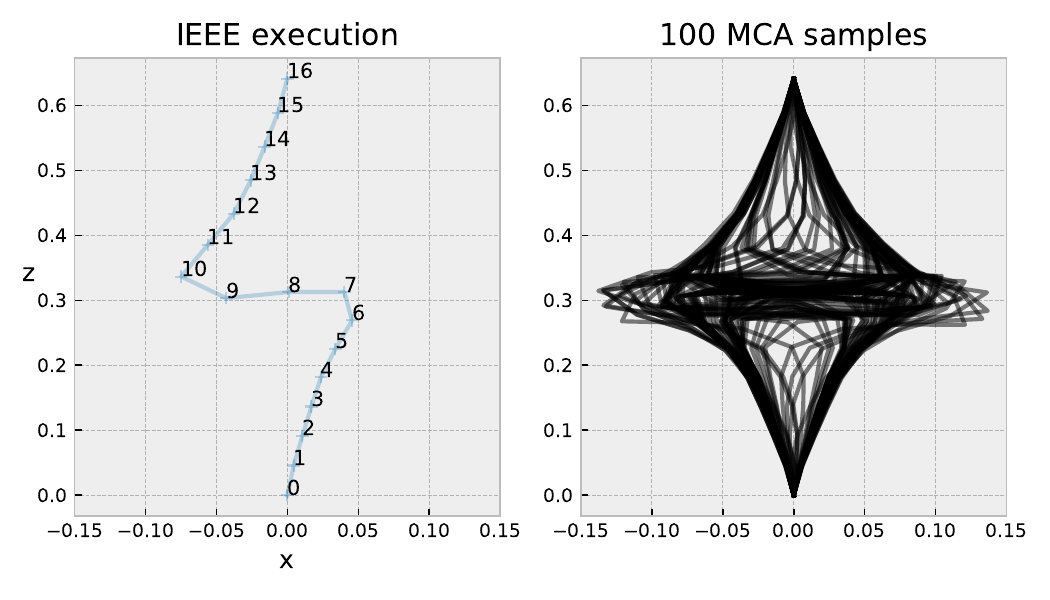}
  \caption{Europlexus buckling simulation of a doubly clamped column subject to a vertical pressure.
    The nodes of the column are labeled
    in the plot from 0 to 16.
    The left plot shows the deterministic and reproducible results produced by an IEEE-754 run of the simulation.
    In the right plot, a two-digits numerical error is simulated by collecting one hundred MCA samples with Verificarlo ($t=50$). The buckling direction is completely dominated by the small numerical error introduced.
    \label{fig:buckling}}
\end{figure}

The left plot in figure~\ref{fig:buckling} shows the result after 300 simulation time-steps with the out-of-the-box Europlexus software using standard IEEE arithmetic. The sequential result is deterministic and does not change when run multiple times. We wish to study how the simulation is affected by small numerical errors.

We run the same simulations but this time using the Verificarlo~\cite{verificarlo} compiler to introduce MCA randomized floating
point errors with a precision of $t=50$. The cost to instrument the whole Europlexus software and its accompanying mathematical libraries was low. In particular no change to the source-code was necessary thanks to the transparent approach to instrumentation of Verificarlo, only the build system had to be configured to use the Verificarlo compiler.

The right plot in figure~\ref{fig:buckling} shows the result of one hundred Verificarlo executions. The direction of the buckling is chaotic and completely dominated by the small FP errors introduced. This is not surprising as the buckling direction is physically unstable.

When parallelizing Europlexus or making changes to the code, it is important to
check that there are no regression on standard benchmarks. Changing the order
of the floating point operations may introduce small rounding errors.  As we
just saw, even small numerical errors change the buckling direction. This makes such a benchmark unsuitable for classical non-regression tests.

The column is modeled as a set of discrete elements connected by nodes. The distribution on the $x$-axis is normal but whatever the node there are no significant digits among the samples. The variation between samples is strong on the $x$-axis, the $x$ position clearly cannot be used as a regression test.

\begin{figure}
\centering
\includegraphics[width=\linewidth]{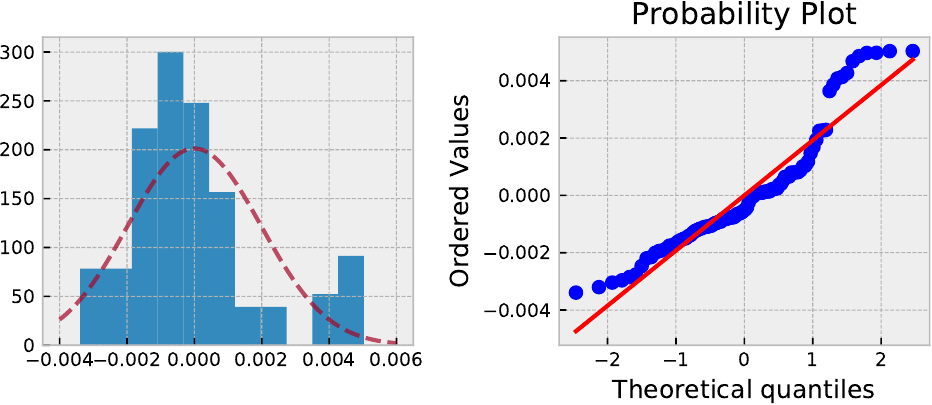}
\caption{Non normality of buckling samples on z axis and node 1. Shapiro Wilk rejects the normality hypothesis.}
\label{fig:normality-buckling}
\end{figure}

The distribution along the $z$ axis is more interesting as it is non normal for all the nodes. Figure~\ref{fig:normality-buckling} shows the quantile-quantile plot for node 1 (Shapiro-Wilk rejects normality with $W = 0.9$ and $p = 1.8e-06$). Because the distribution on the $z$ axis is non normal we should apply the Bernoulli significant bits estimator. In this study, we measure the number of significant digits considering the relative error against the sample mean, so $Z=\frac{X}{\mu_X} - 1$.

\begin{figure}
\centering
\includegraphics[width=.6\textwidth]{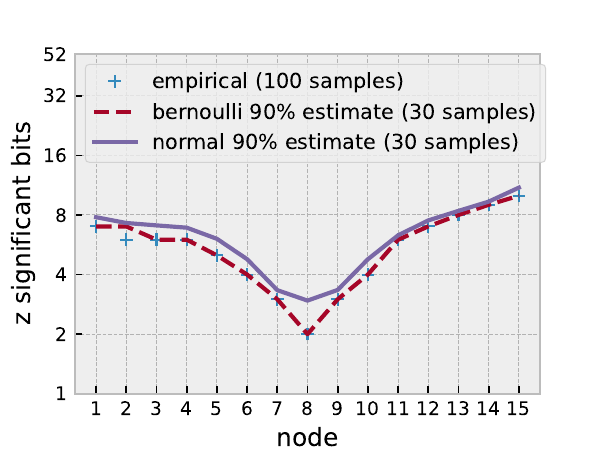}
\caption{Significant bits on the $z$ axis distribution. Bernoulli estimation
captures precisely the behavior (except for node 2). Normal formula overestimates the number of digits, this is expected since the distribution is strongly non normal.
}
\label{fig:sbits-buckling}
\end{figure}

\begin{figure}
\centering
\includegraphics[width=\textwidth]{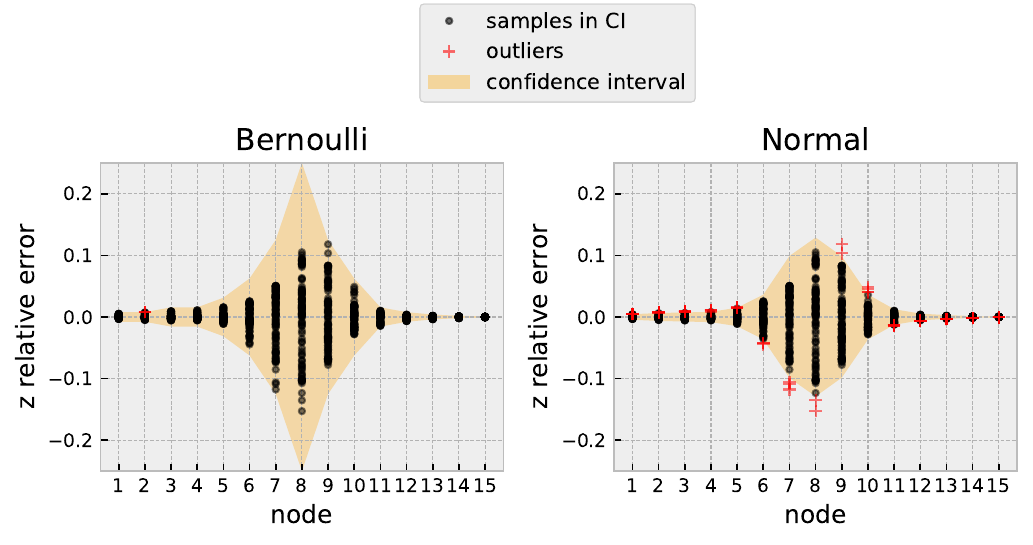}
\caption{Relative error between the samples and the mean of the $z$-axis distribution. The shaded envelope corresponds to the computed confidence interval with 30 samples. Black dots are samples that fall inside the CI. Red crosses are outliers that fall outside the CI. In the Bernoulli case, only 3 samples out of 70 fall outside of the interval; which is compatible with the 90\% probability threshold.}
\label{fig:confidence-buckling}
\end{figure}

To test the robustness of the proposed confidence interval, we
computed the Bernoulli estimate on the first 30 samples of 
the distribution. This corresponds to a probability of 90\% with a confidence of 95\%. We also computed the Normal estimate on the first 30 samples with the same probability and confidence.

Figure~\ref{fig:sbits-buckling} compares the estimates to the empirical distribution observed on 100 samples.  The Bernoulli estimate on 30 samples is fairly precise and accurately predicts the number of significant bits (except for node 2). The clamped node 16 has a fixed position and therefore all its digits are significant. The other nodes have between 2 and 10 significant digits depending on their position.

Figure~\ref{fig:confidence-buckling} shows the expected relative error on each node. We see that the Bernoulli estimate is robust and only mis-predicts the error on three samples of node 2. On the other hand, as expected, the Normal formula is not a good fit in this case due to the strong non normality of the distribution: the normal estimate is too optimistic and fails to capture the variability of the  distribution.

The previous experiments show that the $x$-axis has no significant digits and that the $z$-axis distribution has between 2 and 10 significant digits. For example node 6 has 4 significant digits on the $z$-axis. Therefore, if the practitioner uses the $z$-axis position in this benchmark as a regression test, she should expect the first four digits of the mantissa to match in 90~\% of the runs. If the error is higher than that then a numerical bug has probably been introduced in the code.

\begin{figure}
  \centering
  \includegraphics[width=.75\textwidth]{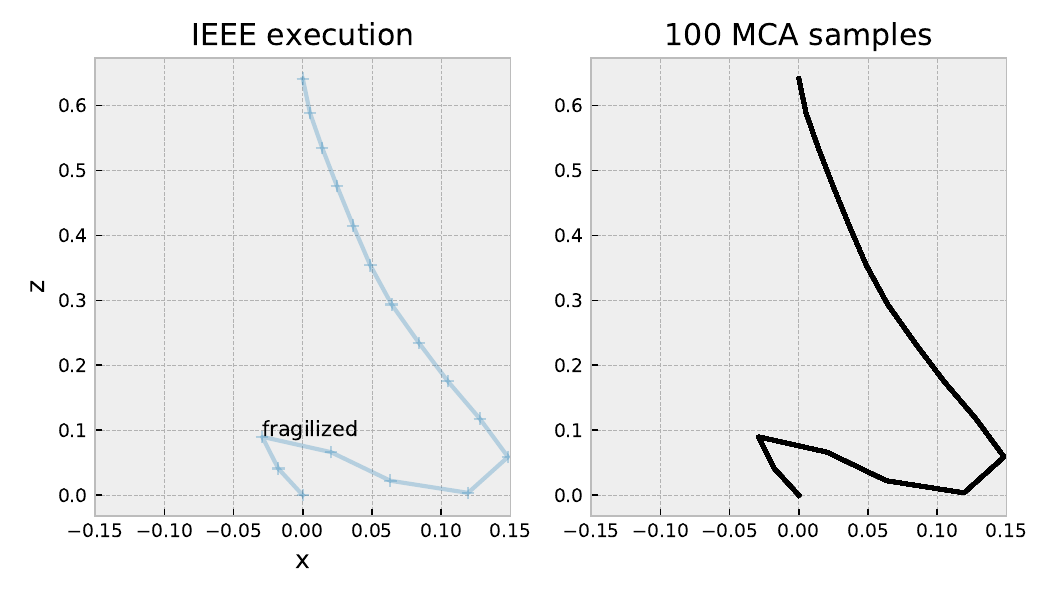}
  \caption{Europlexus buckling simulation with a fragilized node 2.
    By weakening the column, the physical process becomes reproducible in the presence of small numerical noise.
    \label{fig:buckling-fragilized}}
\end{figure}

Another possibility for the practitioner is to adapt slightly the benchmark to make it more robust to numerical noise so it can be used in regression tests.
For example, we can introduce a small perturbation in the numerical model by slightly moving node 2 along the x-axis. Then the buckling is expected to always occur in the same direction. Figure~\ref{fig:buckling-fragilized} shows what happens when node 2 is slightly displaced: the buckling becomes deterministic and robust to numerical noise: 51 bits are significant for the $x$-axis and $z$-axis samples with probability 90~\%; the two bits of precision lost correspond to the stochastic noise introduced. In this case, stochastic methods allow checking that the benchmark has become deterministic and assessing its resilience to noise.

\subsection{Verification of a numerical stability improvements in code\_aster}
\label{sec:aster}

code\_aster \cite{aster} is an open source scientific computation code dedicated to the
simulation of structural mechanics and thermo-mechanics. It has been actively
developed since 1989, mainly by the R\&D division of EDF, a
French electricity utility. It uses the finite elements method
to solve models coming from the continuum mechanics theory, and can be
used to perform simulations in a wide variety of physical fields such as mechanics, thermal physics, acoustics, seismology and others.
code\_aster has a very large source
code base, with more than 1,500,000 lines of code. It also uses numerous
third-party software, such as linear solvers or mesh manipulation tools. Its development team has been dedicated to code quality for a long time, and has
accumulated several hundreds of test cases which are run daily as part of
the Verification \& Validation process.
\smallskip

In a previous study~\cite{fevotte2017studying} of code\_aster, the Verrou tool was used to assess the numerical quality of quantities tested in the non-regression
database. The error localization features of Verrou were used to find the origin of errors in the source code, and improvements were proposed. In
this section, we first summarize the results of this study, before
using the new estimators described in this paper to confidently assess
the benefits of the proposed corrections.

The study focuses on one test case named \texttt{sdnl112a} \cite{sdnl112}, which
computes 6~quantities related to the vibrations of steam generator tubes in
nuclear reactors. These quantities will be denoted here as $a$, $b$, $c$, $d$,
$e$ and $f$. In the original implementation of code\_aster -- which will be
referred to as \texttt{version0} in the following -- the test case successfully
completes on some hardware architectures, and fails on others. In this case,
failing means that some quantities are computed with relative discrepancies
larger than $10^{-6}$ with respect to some reference values. Using the Verrou
tool to assess the numerical quality of these results fails: some runs
perturbated by random rounding bail out without producing results. This exhibits
somewhat severe instabilities, but does not allow quantifying their impact on
the results accuracy.

The methodology then proceeds to finding the origin of such instabilities using
Verrou. In a first stage, a test coverage comparison between two samples
uncovers an unstable branch which is reproduced, after simplification, in figure
\ref{aster:fun1:org}. A reformulation of the incriminated computation
(cf. figure \ref{aster:fun1:cor}) is proposed, which leads to a first corrected
implementation, which will be referred to as \texttt{version1}. This version
still does not pass automated non-regression testing. However, this time,
assessing the stability of the results using Verrou and the standard MCA
estimator (with 6 samples) yields meaningful results. Quantity $e$ is evaluated
to be the most problematic, with only 5 reliable significant decimal digits
(19~bits), when users expect at least 6. Other computed quantities seem to meet
the expected precision. For example, 9~decimal digits (30~bits) are estimated to
be reliably computed for quantity $a$. In the following, we will use notations
consistent with the rest of this paper, the computed result~$x$ being either
quantity $a$ or $e$.

\begin{figure}
\ifpreprint
  \hspace{-1cm}
\fi
  \hfill
  \lstset{language=fortran}
\ifpreprint
  \begin{minipage}[t]{0.4\linewidth}
\else
  \begin{minipage}[t]{0.33\linewidth}
\fi
    \scriptsize
  \begin{lstlisting}
if (a.eq.b) then
  area = a
else
  area = (b-a) / (log(b)-log(a))
endif
  \end{lstlisting}
\caption{Unstable branch detected in code\_aster}
  \label{aster:fun1:org}
\end{minipage}
\hfill\hfill
  \begin{minipage}[t]{0.47\linewidth}
    \scriptsize
  \begin{lstlisting}
if (abs(a-b).lt.tol* min(abs(a),abs(b))) then
  area = a
else
  r = a / b
  area = a * (r-1) / log(r)
endif\end{lstlisting}
  \caption{Unstable branch corrected}
  \label{aster:fun1:cor}
\end{minipage}
\hfill\strut
\end{figure}

In a second stage, the delta-debugging feature of Verrou is used to locate the origin of remaining instabilities in \texttt{version1}. The delta-debugging pinpoints the dot product
in a sparse matrix vector product
(routine \texttt{mrmmvr}). A first approach to  mitigate the loss of
precision, implemented in \texttt{version2}, consists in using a compensated
summation algorithm in the dot product: algorithm \texttt{sum2} from~\cite{ogita2005}. In \texttt{version3},
a fully compensated dot product algorithm is implemented: algorithm \texttt{dot2}
from~\cite{ogita2005}.

Still using the standard MCA estimator, the study concludes that
\texttt{version2} slightly improves results precision: respectively 30 and
19~reliable bits for quantities $a$~and~$e$. Only \texttt{version3} meets user
thresholds, with respectively 32 and 21~reliable bits for $a$~and~$e$. This
version is also the only one to pass automated non-regression and verification
tests.

These different variants of code\_aster, along with the estimated quality of
their results, are summarized in table~\ref{MCATab}.

\begin{table}
\begin{tabular}{lcccl}
\toprule
Implementation
    & \multicolumn{2}{c}{$\hat s_{\text{\sc mca}}$}
    && comment\\ 
 &$a$& $e$ & & \\ 
\midrule 
 \texttt{version0}  &   Fail  & Fail  & & original version\\
 \texttt{version1}  &   30.89 & 19.73 & & fixes an unstable test\\
 \texttt{version2}  &   30.96 & 19.80 & & compensated summation\\
 \texttt{version3}  &   32.82 & 21.65 & & fully compensated dot product\\
  \bottomrule
\end{tabular}

\caption{Summary of the numerical quality assessment of 4 versions of
    code\_aster, using Verrou and the standard MCA estimator with 6 samples. 
}
\label{MCATab}
\end{table}
 
\medskip

Therefore, \texttt{version3} should be considered as a good candidate to fix the
implementation of code\_aster. However, this raises many questions. Are we
confident enough that this version really produces more accurate results? Should
results produced by ``older'' versions of code\_aster be considered invalid?

While 6~samples were enough for the purpose of debugging code\_aster, and in
light of what is at stake with a change in the actual implementation of
code\_aster, a higher degree of confidence should be required here.

Following the rules of Bernoulli experiments without failure and choosing
$p=1-\alpha=0.995$, the required number of samples is
$N=1058$. Table~\ref{cmpAsterEstimator} reports accuracy estimations of the
three versions of code\_aster for this new number of samples. Four estimators
are compared:
\begin{enumerate}
\item $\hat s_{\text{\sc b}}^{\hat\mu}$: estimator based on Bernoulli
  experiments without failure, with a reference result taken to be the average
  of all samples;
\item $\hat s_{\text{\sc b}}^{\text{\sc ieee}} $: estimator similar to the
  previous one, but taking the {\sc ieee} computation as reference;
\item $\hat s _{\textrm{\sc cnh}}$: estimator based on the Centered
  Normal Hypothesis.  Value $\hat \mu$ is taken as the reference to satisfy
  the centered hypothesis. This estimator should be used together with
  a normality test, such as Shapiro-Wilk, of which the $p$-value is displayed in parentheses.
\item $\hat s_{\textrm{\sc mca}}$: standard MCA estimator. 

\end{enumerate}

\medskip

\newcolumntype{P}[1]{>{\centering\arraybackslash}p{#1}}
\begin{table}
\centering\begin{subtable}{\textwidth}
    \centering
    \begin{tabular}{lccP{7em}c}
  \toprule
  Implementation & $\hat s_{\text{\sc b}}^{\hat\mu}$ & $\hat s_{\text{\sc b}}^{\text{\sc ieee}} $&  $\hat s_{\textrm{\sc cnh}}$ (normality test $p$-value) & $\hat s_{\textrm{\sc mca}}$ \\
 \midrule
 \texttt{version1}  &  28   & 28    & 29.01 (0.10) & 30.59\\
\texttt{version2}  &  29   & 29    & 29.55 (0.89) & 31.13\\
\texttt{version3}  &  30   & 31    & 31.22 (0.52) & 32.79\\
\bottomrule
\end{tabular}

    \caption{quantity $a$}
    \label{cmpAsterEstimator0}
    \end{subtable}
    \begin{subtable}{\textwidth}
    \centering
    \begin{tabular}{lccP{7em}c}
  \toprule
  Implementation & $\hat s_{\text{\sc b}}^{\hat\mu}$ & $\hat s_{\text{\sc b}}^{\text{\sc ieee}} $&  $\hat s_{\textrm{\sc cnh}}$ (normality test $p$-value) & $\hat s_{\textrm{\sc mca}}$ \\
 \midrule
 \texttt{version1}  &  17   & 17    & 17.85 (0.10) & 19.43\\
\texttt{version2}  &  18   & 18    & 18.39 (0.89) & 19.97\\
\texttt{version3}  &  19   & 19    & 20.05 (0.52) & 21.63\\
\bottomrule
\end{tabular}

    \caption{quantity $e$}
    \label{cmpAsterEstimator4}
    \end{subtable}

\caption{Comparison of stochastic estimators for 3 version of code\_aster, with 1058 samples.}
\label{cmpAsterEstimator}
\end{table} 
With the 4 estimators, \texttt{version3} is more accurate for both
variables. As the normality test sometimes fails, this version can be
selected only based on  $\hat s_{\text{\sc b}}^{\hat\mu}$ and $\hat
s_{\text{\sc b}}^{\text{\sc ieee}}$, which give similar results.

When results follow a Gaussian distribution\footnote{The normality of \texttt{version1} results might be subject to caution, but the normality of \texttt{version2}
and \texttt{version3} was not rejected with Shapiro-Wilk $p$-values significantly higher than~$0.1$.}, the estimate provided by $\hat
s_{\textrm{\sc cnh}}$ is slightly less conservative than~$\hat
s_{\text{b}}$ while remaining, by construction, sound and more
conservative than $\hat s_{\textrm{\sc mca}}$.

It is interesting to note here that the Bernoulli indicator
$\hat s_{\text{\sc b}}$ is by definition an integer, which can somewhat
be limiting. Looking for example at $\hat s_{\text{\sc b}}^{\hat\mu}$ for quantity
$e$, as computed by \texttt{version2} and \texttt{version3} and reported in
Table~\ref{cmpAsterEstimator4}, is the difference between 18 and 19~bits really
significant, or does it only come from definition~\eqref{eq:sb} restricting
$\hat s_{\text{\sc b}}$ to integer values? Using the centered normality
hypothesis estimator, the accuracy improvement between \texttt{version2} and
\texttt{version3} can be estimated to 1.66~bits. It is also possible to
generalize definition~\eqref{eq:sb} to fractional values of $k$. In this
example, this would yield estimated numbers of significant bits of 18.16 for
\texttt{version2} and 19.75 for \texttt{version3} respectively, again estimating
the gain in accuracy to approximately 1.6~bits.

\medskip

In any case, the results produced by \texttt{version3} are confidently estimated
to be computed with 20~reliable bits for quantity $e$ (\textit{i.e.} relative
error in the order of $2^{-20}\approx 9.5\times 10^{-7}$), which now satisfies
the user requirement of 6~decimal digits (although barely). Quantity $a$ is
estimated to be computed with at least 31~reliable bits, or
approximately~$4.7\times 10^{-10}$ relative error. And it is also safe
to conclude that this implementation is the most robust among all three tested
versions. \texttt{Version3} can thus be safely introduced in the code\_aster code
base.

One could also wonder, in retrospect, whether results produced by the IEEE
execution of the original version of code\_aster (\texttt{version0}) were
valid. The comparison of IEEE results produced by \texttt{version0} and
\texttt{version3} yields relative discrepancies of $4.29\times 10^{-10}$ and
$9.84\times 10^{-7}$ respectively for quantities $a$ and $e$. These
discrepancies have the same order of magnitude as the uncertainties estimated
above, an observation which does not invalidate the IEEE results produced by
\texttt{version0}.
 
\section{Conclusion and Future Works}
\label{sec:conclusion}
\label{sec:limitations}

Stochastic arithmetic methods like MCA or CESTAC suffer from some limitations. Some of these are addressed by the current work; some others are not and should therefore be addressed in future work. 

\medskip

First, stochastic arithmetic methods provide accuracy assessments which are valid only for the specific set of input data that were used at run-time. Similarly to other run-time based analysis, the robustness of the conclusions can only be obtained through the multiplication of test cases to maximize the coverage of the analysis. 

One way to overcome this limitation consists in using formal tools based on the static analysis of the source code. The aim of such tools is to prove that a given program follows its specifications for all possible input data in its admissible range. However, although much progress has been done in this area over the last few years~\cite{BCF13}, to the authors' knowledge, the use of such methods, especially in the field of scientific computing codes, is limited to small programs.

\medskip

Second, stochastic arithmetic methods do not provide a formal proof of correctness. Interval Arithmetic~\cite{rump2010verification} is an example of guaranteed run-time analysis method, which provides a sound interval for the result of a floating point computation. However, the use of such a method is not always tractable, especially for complex programs with data dependent control paths. In that case the solution often rapidly diverges to dramatic overestimation of the error and therefore does not bring useful information~\cite{martel2005}. Affine arithmetic can somewhat mitigate this effect and extend the applicability of guaranteed analyses to larger or more complex programs~\cite{ghorbal2009}.

On the other hand, stochastic arithmetic methods model round-off and cancellation errors with a Monte Carlo simulation. Stochastic methods scale well to programs with greater code size and complexity and do not suffer from the intractability problems of other. However, stochastic methods require multiple executions of the program, but the cost can be offset since they run in an embarrassingly parallel way. Their estimates are based on a limited number of random samples for each particular use-case input data. Therefore, as shown in this paper, stochastic methods do not provide formal guarantees but statistical confidence and probability.

\medskip

Finally, there remains an important limitation to stochastic arithmetic methods, even using our methodology: the practical usefulness of quality assessments provided by stochastic methods relies on the implied hypothesis that MCA or CESTAC correctly model the mechanisms causing accuracy losses in industrial calculations. 

If a stochastic arithmetic method fails to reproduce the effect of floating-point round-offs, threads scheduling changes or other important factors affecting the accuracy of the computed result, then the quality assessment will be affected not only by \textit{sampling} errors as seen above, but also by \textit{model} errors. CESTAC and MCA have been designed to model realistically the round-off and cancellation errors in IEEE-754 floating-point arithmetic. Practice shows they correctly simulate floating-points bugs in many cases. However, it is possible to find corner cases where CESTAC or MCA diverge from the IEEE-754 computation, producing results which can be more accurate~\cite{chesneaux1993algorithme} or less accurate~\cite{verificarlo}.

\medskip

Since the confidence intervals introduced in this paper tackle sampling errors, the next step consists in preventing and detecting model errors. This requires in-depth understanding of the limitations and hypotheses of each stochastic arithmetic model, documentation informing the practitioner of these hypotheses and, when possible, automated run-time checks raising warnings when these hypotheses are not met in a particular calculation. Some of the stochastic arithmetic software tools, such as CADNA, implement runtime checks that detect some cases violating the hypotheses of the model~\cite{JEZEQUEL2008933}.

In the meantime, model errors can be mitigated with simple sanity checks. For example, taking the IEEE-754 computation of the program as a reference value is a good sanity check. Indeed, if the stochastic model diverges from the IEEE-754 computation, the large error with respect to the reference value will raise a red flag.

The results of this paper are applicable to both centered normal distribution and general distributions. In the case of centered normal distributions, we show that to reach a given confidence level it is enough to apply a constant shift to the number of significant digits computed using the standard MCA estimator. In the case of general distribution, we provide a simple formula to decide how many samples are needed to reach a given confidence level. Appendix~\ref{sec:abaque} provides tables for the shifts and number of samples in the centered normal and general cases.

This paper also introduces a novel measure for the precision of a result, the number of contributing digits, after which digits are random noise. This metric can help choosing how many digits to keep when forwarding a result to another tool or storing a result during a checkpoint.

An accompanying Jupyter notebook~\footnote{\url{https://github.com/interflop/stochastic-confidence-intervals}} provides a reference Python implementation that can be used by the practitioner to compute the confidence intervals from this paper in the normal and Bernoulli cases.

\ifpreprint
\section*{Acknowledgements}
\else
\begin{acks}
\fi

The authors thank Yohan Chatelain for helpful reviews and feedbacks and the anonymous reviewers for their helpful comments.

\ifpreprint
\bibliographystyle{alpha}
\else
\end{acks}
\bibliographystyle{ACM-Reference-Format}
\fi
\bibliography{biblio}

\appendix\newpage
\section{Tabulated numbers of samples and variance shifts}
\label{sec:abaque}

This appendix presents tabulated values allowing the practitioner to easily
choose a desired number of samples (table~\ref{tab:nsamples}) in the case of a
general distribution of results (Bernoulli framework), or determine the shifting
levels to build sound estimators from the sampled variance in the centered
normality hypothesis (table~\ref{tab:shift}).

\begin{table}[bp]
\rowcolors{4}{black!10}{}
\begin{tabular}{cccccccccc}
  \toprule
  \multirow{2}{5em}{\centering Confidence level $1-\alpha$}
  & \multicolumn{9}{c}{Probability $p$} \\
  \cmidrule{2-10}
  & 0.66 & 0.75 & 0.8 & 0.85 & 0.9 & 0.95 & 0.99 & 0.995 & 0.999\\
  \midrule
  0.66&3 & 4 & 5 & 7 & 11 & 22 & 108 & 216 & 1079\\
0.75&4 & 5 & 7 & 9 & 14 & 28 & 138 & 277 & 1386\\
0.8&4 & 6 & 8 & 10 & 16 & 32 & 161 & 322 & 1609\\
0.85&5 & 7 & 9 & 12 & 19 & 37 & 189 & 379 & 1897\\
0.9&6 & 9 & 11 & 15 & 22 & 45 & 230 & 460 & 2302\\
0.95&8 & 11 & 14 & 19 & 29 & 59 & 299 & 598 & 2995\\
0.99&12 & 17 & 21 & 29 & 44 & 90 & 459 & 919 & 4603\\
0.995&13 & 19 & 24 & 33 & 51 & 104 & 528 & 1058 & 5296\\
0.999&17 & 25 & 31 & 43 & 66 & 135 & 688 & 1379 & 6905\\

  \bottomrule
\end{tabular}
\caption{Number of samples necessary to obtain a given confidence interval with
probability $p$, according to the Bernoulli estimator (\textit{i.e.} without
any assumption on the probability law).}
\label{tab:nsamples}
\end{table} 

\begin{landscape}
\begin{table}[h!]
\centering
\resizebox{57em}{!}{
\begin{tabular}{ccccccccccccccccccccc}
  \toprule
  & \multicolumn{1}{r}{$p$} & 0.66 & 0.66 & 0.75 & 0.75 & 0.75 & 0.9 & 0.9 & 0.9 & 0.95 & 0.95 & 0.95 & 0.99 & 0.99 & 0.99 & 0.995 & 0.995 & 0.995 & 0.999 & 0.999 \\
  & \multicolumn{1}{r}{$1-\alpha$} & 0.66 & 0.75 & 0.66 & 0.75 & 0.9 & 0.75 & 0.9 & 0.95 & 0.9 & 0.95 & 0.99 & 0.95 & 0.99 & 0.995 & 0.99 & 0.995 & 0.999 & 0.995 & 0.999 \\
  \midrule
  \multirow{36}{1em}{\rotatebox{90}{$N_{\text{samples}}$}}
  &3 & 1.145 & 1.385 & 1.415 & 1.655 & 2.345 & 2.171 & 2.861 & 3.370 & 3.114 & 3.623 & 4.791 & 4.017 & 5.186 & 5.687 & 5.310 & 5.811 & 6.972 & 6.040 & 7.202\\
&\cellcolor{black!10}4 & \cellcolor{black!10}0.817 & \cellcolor{black!10}0.990 & \cellcolor{black!10}1.086 & \cellcolor{black!10}1.260 & \cellcolor{black!10}1.749 & \cellcolor{black!10}1.776 & \cellcolor{black!10}2.264 & \cellcolor{black!10}2.617 & \cellcolor{black!10}2.517 & \cellcolor{black!10}2.870 & \cellcolor{black!10}3.665 & \cellcolor{black!10}3.264 & \cellcolor{black!10}4.059 & \cellcolor{black!10}4.396 & \cellcolor{black!10}4.183 & \cellcolor{black!10}4.520 & \cellcolor{black!10}5.298 & \cellcolor{black!10}4.749 & \cellcolor{black!10}5.527\\
&5 & 0.650 & 0.790 & 0.919 & 1.060 & 1.449 & 1.576 & 1.965 & 2.241 & 2.218 & 2.494 & 3.108 & 2.888 & 3.502 & 3.759 & 3.626 & 3.883 & 4.473 & 4.112 & 4.703\\
&\cellcolor{black!10}6 & \cellcolor{black!10}0.546 & \cellcolor{black!10}0.667 & \cellcolor{black!10}0.816 & \cellcolor{black!10}0.936 & \cellcolor{black!10}1.266 & \cellcolor{black!10}1.452 & \cellcolor{black!10}1.781 & \cellcolor{black!10}2.013 & \cellcolor{black!10}2.034 & \cellcolor{black!10}2.266 & \cellcolor{black!10}2.772 & \cellcolor{black!10}2.660 & \cellcolor{black!10}3.167 & \cellcolor{black!10}3.377 & \cellcolor{black!10}3.291 & \cellcolor{black!10}3.501 & \cellcolor{black!10}3.981 & \cellcolor{black!10}3.730 & \cellcolor{black!10}4.210\\
&8 & 0.423 & 0.519 & 0.693 & 0.789 & 1.048 & 1.305 & 1.564 & 1.744 & 1.817 & 1.997 & 2.383 & 2.391 & 2.777 & 2.935 & 2.901 & 3.059 & 3.415 & 3.288 & 3.645\\
&\cellcolor{black!10}9 & \cellcolor{black!10}0.382 & \cellcolor{black!10}0.470 & \cellcolor{black!10}0.652 & \cellcolor{black!10}0.740 & \cellcolor{black!10}0.977 & \cellcolor{black!10}1.256 & \cellcolor{black!10}1.493 & \cellcolor{black!10}1.656 & \cellcolor{black!10}1.746 & \cellcolor{black!10}1.909 & \cellcolor{black!10}2.258 & \cellcolor{black!10}2.303 & \cellcolor{black!10}2.652 & \cellcolor{black!10}2.794 & \cellcolor{black!10}2.776 & \cellcolor{black!10}2.918 & \cellcolor{black!10}3.236 & \cellcolor{black!10}3.147 & \cellcolor{black!10}3.465\\
&10 & 0.350 & 0.432 & 0.620 & 0.701 & 0.921 & 1.217 & 1.437 & 1.587 & 1.690 & 1.840 & 2.159 & 2.234 & 2.553 & 2.682 & 2.677 & 2.806 & 3.095 & 3.036 & 3.324\\
&\cellcolor{black!10}12 & \cellcolor{black!10}0.301 & \cellcolor{black!10}0.373 & \cellcolor{black!10}0.570 & \cellcolor{black!10}0.643 & \cellcolor{black!10}0.835 & \cellcolor{black!10}1.159 & \cellcolor{black!10}1.351 & \cellcolor{black!10}1.482 & \cellcolor{black!10}1.604 & \cellcolor{black!10}1.735 & \cellcolor{black!10}2.011 & \cellcolor{black!10}2.129 & \cellcolor{black!10}2.405 & \cellcolor{black!10}2.516 & \cellcolor{black!10}2.529 & \cellcolor{black!10}2.640 & \cellcolor{black!10}2.886 & \cellcolor{black!10}2.869 & \cellcolor{black!10}3.115\\
&14 & 0.265 & 0.330 & 0.535 & 0.600 & 0.773 & 1.116 & 1.289 & 1.406 & 1.542 & 1.659 & 1.905 & 2.054 & 2.299 & 2.397 & 2.423 & 2.521 & 2.737 & 2.750 & 2.967\\
&\cellcolor{black!10}15 & \cellcolor{black!10}0.250 & \cellcolor{black!10}0.313 & \cellcolor{black!10}0.520 & \cellcolor{black!10}0.583 & \cellcolor{black!10}0.748 & \cellcolor{black!10}1.099 & \cellcolor{black!10}1.264 & \cellcolor{black!10}1.376 & \cellcolor{black!10}1.517 & \cellcolor{black!10}1.629 & \cellcolor{black!10}1.862 & \cellcolor{black!10}2.023 & \cellcolor{black!10}2.256 & \cellcolor{black!10}2.349 & \cellcolor{black!10}2.380 & \cellcolor{black!10}2.473 & \cellcolor{black!10}2.678 & \cellcolor{black!10}2.702 & \cellcolor{black!10}2.907\\
&20 & 0.197 & 0.250 & 0.467 & 0.520 & 0.657 & 1.036 & 1.173 & 1.265 & 1.426 & 1.518 & 1.708 & 1.912 & 2.102 & 2.177 & 2.226 & 2.301 & 2.465 & 2.530 & 2.695\\
&\cellcolor{black!10}22 & \cellcolor{black!10}0.182 & \cellcolor{black!10}0.232 & \cellcolor{black!10}0.452 & \cellcolor{black!10}0.502 & \cellcolor{black!10}0.631 & \cellcolor{black!10}1.018 & \cellcolor{black!10}1.147 & \cellcolor{black!10}1.234 & \cellcolor{black!10}1.400 & \cellcolor{black!10}1.486 & \cellcolor{black!10}1.664 & \cellcolor{black!10}1.881 & \cellcolor{black!10}2.059 & \cellcolor{black!10}2.129 & \cellcolor{black!10}2.183 & \cellcolor{black!10}2.253 & \cellcolor{black!10}2.406 & \cellcolor{black!10}2.482 & \cellcolor{black!10}2.635\\
&25 & 0.164 & 0.210 & 0.434 & 0.479 & 0.599 & 0.995 & 1.115 & 1.195 & 1.368 & 1.448 & 1.611 & 1.842 & 2.005 & 2.070 & 2.129 & 2.194 & 2.333 & 2.423 & 2.562\\
&\cellcolor{black!10}29 & \cellcolor{black!10}0.144 & \cellcolor{black!10}0.186 & \cellcolor{black!10}0.414 & \cellcolor{black!10}0.456 & \cellcolor{black!10}0.566 & \cellcolor{black!10}0.972 & \cellcolor{black!10}1.081 & \cellcolor{black!10}1.154 & \cellcolor{black!10}1.334 & \cellcolor{black!10}1.407 & \cellcolor{black!10}1.555 & \cellcolor{black!10}1.801 & \cellcolor{black!10}1.950 & \cellcolor{black!10}2.008 & \cellcolor{black!10}2.074 & \cellcolor{black!10}2.132 & \cellcolor{black!10}2.257 & \cellcolor{black!10}2.361 & \cellcolor{black!10}2.487\\
&30 & 0.140 & 0.181 & 0.410 & 0.451 & 0.558 & 0.967 & 1.074 & 1.145 & 1.327 & 1.398 & 1.543 & 1.792 & 1.938 & 1.994 & 2.062 & 2.118 & 2.241 & 2.348 & 2.471\\
&\cellcolor{black!10}40 & \cellcolor{black!10}0.108 & \cellcolor{black!10}0.143 & \cellcolor{black!10}0.378 & \cellcolor{black!10}0.413 & \cellcolor{black!10}0.504 & \cellcolor{black!10}0.929 & \cellcolor{black!10}1.019 & \cellcolor{black!10}1.079 & \cellcolor{black!10}1.272 & \cellcolor{black!10}1.332 & \cellcolor{black!10}1.453 & \cellcolor{black!10}1.726 & \cellcolor{black!10}1.847 & \cellcolor{black!10}1.895 & \cellcolor{black!10}1.971 & \cellcolor{black!10}2.019 & \cellcolor{black!10}2.120 & \cellcolor{black!10}2.248 & \cellcolor{black!10}2.349\\
&45 & 0.097 & 0.129 & 0.366 & 0.399 & 0.484 & 0.915 & 1.000 & 1.056 & 1.253 & 1.308 & 1.421 & 1.703 & 1.815 & 1.859 & 1.939 & 1.983 & 2.077 & 2.212 & 2.306\\
&\cellcolor{black!10}50 & \cellcolor{black!10}0.087 & \cellcolor{black!10}0.118 & \cellcolor{black!10}0.357 & \cellcolor{black!10}0.388 & \cellcolor{black!10}0.468 & \cellcolor{black!10}0.904 & \cellcolor{black!10}0.984 & \cellcolor{black!10}1.036 & \cellcolor{black!10}1.236 & \cellcolor{black!10}1.289 & \cellcolor{black!10}1.395 & \cellcolor{black!10}1.683 & \cellcolor{black!10}1.789 & \cellcolor{black!10}1.830 & \cellcolor{black!10}1.913 & \cellcolor{black!10}1.954 & \cellcolor{black!10}2.042 & \cellcolor{black!10}2.183 & \cellcolor{black!10}2.271\\
&59 & 0.073 & 0.102 & 0.343 & 0.371 & 0.444 & 0.887 & 0.960 & 1.008 & 1.213 & 1.261 & 1.356 & 1.655 & 1.751 & 1.788 & 1.875 & 1.912 & 1.991 & 2.141 & 2.220\\
&\cellcolor{black!10}75 & \cellcolor{black!10}0.056 & \cellcolor{black!10}0.081 & \cellcolor{black!10}0.326 & \cellcolor{black!10}0.350 & \cellcolor{black!10}0.414 & \cellcolor{black!10}0.866 & \cellcolor{black!10}0.930 & \cellcolor{black!10}0.972 & \cellcolor{black!10}1.183 & \cellcolor{black!10}1.224 & \cellcolor{black!10}1.308 & \cellcolor{black!10}1.619 & \cellcolor{black!10}1.702 & \cellcolor{black!10}1.734 & \cellcolor{black!10}1.826 & \cellcolor{black!10}1.858 & \cellcolor{black!10}1.927 & \cellcolor{black!10}2.087 & \cellcolor{black!10}2.156\\
&90 & 0.044 & 0.067 & 0.314 & 0.336 & 0.394 & 0.852 & 0.910 & 0.947 & 1.163 & 1.200 & 1.275 & 1.595 & 1.670 & 1.698 & 1.794 & 1.822 & 1.884 & 2.052 & 2.113\\
&\cellcolor{black!10}100 & \cellcolor{black!10}0.038 & \cellcolor{black!10}0.059 & \cellcolor{black!10}0.308 & \cellcolor{black!10}0.329 & \cellcolor{black!10}0.383 & \cellcolor{black!10}0.845 & \cellcolor{black!10}0.899 & \cellcolor{black!10}0.935 & \cellcolor{black!10}1.152 & \cellcolor{black!10}1.188 & \cellcolor{black!10}1.258 & \cellcolor{black!10}1.582 & \cellcolor{black!10}1.652 & \cellcolor{black!10}1.680 & \cellcolor{black!10}1.776 & \cellcolor{black!10}1.804 & \cellcolor{black!10}1.861 & \cellcolor{black!10}2.033 & \cellcolor{black!10}2.091\\
&200 & 0.005 & 0.020 & 0.275 & 0.290 & 0.327 & 0.806 & 0.843 & 0.868 & 1.096 & 1.120 & 1.169 & 1.515 & 1.563 & 1.581 & 1.687 & 1.705 & 1.744 & 1.935 & 1.973\\
&\cellcolor{black!10}299 & \cellcolor{black!10}-0.008 & \cellcolor{black!10}0.004 & \cellcolor{black!10}0.261 & \cellcolor{black!10}0.273 & \cellcolor{black!10}0.304 & \cellcolor{black!10}0.789 & \cellcolor{black!10}0.820 & \cellcolor{black!10}0.839 & \cellcolor{black!10}1.072 & \cellcolor{black!10}1.092 & \cellcolor{black!10}1.131 & \cellcolor{black!10}1.486 & \cellcolor{black!10}1.525 & \cellcolor{black!10}1.540 & \cellcolor{black!10}1.649 & \cellcolor{black!10}1.664 & \cellcolor{black!10}1.695 & \cellcolor{black!10}1.893 & \cellcolor{black!10}1.924\\
&300 & -0.008 & 0.003 & 0.261 & 0.273 & 0.304 & 0.789 & 0.819 & 0.839 & 1.072 & 1.092 & 1.131 & 1.486 & 1.525 & 1.540 & 1.649 & 1.664 & 1.695 & 1.893 & 1.924\\
&\cellcolor{black!10}459 & \cellcolor{black!10}-0.020 & \cellcolor{black!10}-0.011 & \cellcolor{black!10}0.250 & \cellcolor{black!10}0.259 & \cellcolor{black!10}0.284 & \cellcolor{black!10}0.775 & \cellcolor{black!10}0.799 & \cellcolor{black!10}0.815 & \cellcolor{black!10}1.052 & \cellcolor{black!10}1.068 & \cellcolor{black!10}1.099 & \cellcolor{black!10}1.462 & \cellcolor{black!10}1.493 & \cellcolor{black!10}1.505 & \cellcolor{black!10}1.617 & \cellcolor{black!10}1.629 & \cellcolor{black!10}1.653 & \cellcolor{black!10}1.858 & \cellcolor{black!10}1.883\\
&500 & -0.022 & -0.013 & 0.248 & 0.257 & 0.280 & 0.773 & 0.796 & 0.811 & 1.049 & 1.064 & 1.093 & 1.458 & 1.487 & 1.499 & 1.611 & 1.623 & 1.646 & 1.852 & 1.876\\
&\cellcolor{black!10}528 & \cellcolor{black!10}-0.023 & \cellcolor{black!10}-0.015 & \cellcolor{black!10}0.246 & \cellcolor{black!10}0.255 & \cellcolor{black!10}0.278 & \cellcolor{black!10}0.771 & \cellcolor{black!10}0.794 & \cellcolor{black!10}0.808 & \cellcolor{black!10}1.047 & \cellcolor{black!10}1.061 & \cellcolor{black!10}1.090 & \cellcolor{black!10}1.455 & \cellcolor{black!10}1.484 & \cellcolor{black!10}1.495 & \cellcolor{black!10}1.608 & \cellcolor{black!10}1.619 & \cellcolor{black!10}1.642 & \cellcolor{black!10}1.848 & \cellcolor{black!10}1.871\\
&750 & -0.031 & -0.023 & 0.239 & 0.247 & 0.265 & 0.762 & 0.781 & 0.793 & 1.034 & 1.046 & 1.070 & 1.441 & 1.464 & 1.473 & 1.588 & 1.597 & 1.616 & 1.827 & 1.846\\
&\cellcolor{black!10}919 & \cellcolor{black!10}-0.034 & \cellcolor{black!10}-0.028 & \cellcolor{black!10}0.235 & \cellcolor{black!10}0.242 & \cellcolor{black!10}0.259 & \cellcolor{black!10}0.758 & \cellcolor{black!10}0.775 & \cellcolor{black!10}0.786 & \cellcolor{black!10}1.028 & \cellcolor{black!10}1.039 & \cellcolor{black!10}1.060 & \cellcolor{black!10}1.433 & \cellcolor{black!10}1.455 & \cellcolor{black!10}1.463 & \cellcolor{black!10}1.579 & \cellcolor{black!10}1.587 & \cellcolor{black!10}1.604 & \cellcolor{black!10}1.816 & \cellcolor{black!10}1.833\\
&1000 & -0.036 & -0.029 & 0.234 & 0.241 & 0.257 & 0.756 & 0.773 & 0.783 & 1.026 & 1.036 & 1.057 & 1.430 & 1.451 & 1.459 & 1.575 & 1.583 & 1.599 & 1.812 & 1.828\\
&\cellcolor{black!10}1058 & \cellcolor{black!10}-0.037 & \cellcolor{black!10}-0.030 & \cellcolor{black!10}0.233 & \cellcolor{black!10}0.239 & \cellcolor{black!10}0.255 & \cellcolor{black!10}0.755 & \cellcolor{black!10}0.771 & \cellcolor{black!10}0.781 & \cellcolor{black!10}1.024 & \cellcolor{black!10}1.034 & \cellcolor{black!10}1.054 & \cellcolor{black!10}1.428 & \cellcolor{black!10}1.448 & \cellcolor{black!10}1.456 & \cellcolor{black!10}1.572 & \cellcolor{black!10}1.580 & \cellcolor{black!10}1.596 & \cellcolor{black!10}1.809 & \cellcolor{black!10}1.825\\
&1379 & -0.040 & -0.035 & 0.229 & 0.235 & 0.249 & 0.751 & 0.764 & 0.773 & 1.017 & 1.026 & 1.044 & 1.420 & 1.438 & 1.444 & 1.562 & 1.568 & 1.582 & 1.798 & 1.812\\
&\cellcolor{black!10}5296 & \cellcolor{black!10}-0.054 & \cellcolor{black!10}-0.051 & \cellcolor{black!10}0.216 & \cellcolor{black!10}0.219 & \cellcolor{black!10}0.226 & \cellcolor{black!10}0.735 & \cellcolor{black!10}0.742 & \cellcolor{black!10}0.746 & \cellcolor{black!10}0.995 & \cellcolor{black!10}0.999 & \cellcolor{black!10}1.008 & \cellcolor{black!10}1.393 & \cellcolor{black!10}1.402 & \cellcolor{black!10}1.405 & \cellcolor{black!10}1.526 & \cellcolor{black!10}1.529 & \cellcolor{black!10}1.536 & \cellcolor{black!10}1.759 & \cellcolor{black!10}1.766\\
&6905 & -0.055 & -0.053 & 0.214 & 0.217 & 0.223 & 0.733 & 0.739 & 0.743 & 0.992 & 0.996 & 1.003 & 1.390 & 1.397 & 1.400 & 1.521 & 1.524 & 1.530 & 1.754 & 1.760\\
&\cellcolor{black!10}10000 & \cellcolor{black!10}-0.057 & \cellcolor{black!10}-0.055 & \cellcolor{black!10}0.212 & \cellcolor{black!10}0.214 & \cellcolor{black!10}0.219 & \cellcolor{black!10}0.730 & \cellcolor{black!10}0.735 & \cellcolor{black!10}0.739 & \cellcolor{black!10}0.988 & \cellcolor{black!10}0.991 & \cellcolor{black!10}0.998 & \cellcolor{black!10}1.386 & \cellcolor{black!10}1.392 & \cellcolor{black!10}1.394 & \cellcolor{black!10}1.516 & \cellcolor{black!10}1.518 & \cellcolor{black!10}1.523 & \cellcolor{black!10}1.748 & \cellcolor{black!10}1.753\\

  \bottomrule
\end{tabular}}
\medskip

\caption{Tabulated values of the shift~$\delta_{\text{\sc cnh}}$ used to
compute sound lower bounds in the Centered Normality Hypothesis, for various
sets of parameters.}

\label{tab:shift}
\end{table}
\end{landscape}

\end{document}